\DeclareSymbolFont{cyrletters}{OT2}{wncyr}{m}{n}
\DeclareMathSymbol{\Sha}{\mathalpha}{cyrletters}{"58}
\theoremstyle{plain}
\newtheorem{theorem}{Theorem}[section]
\newtheorem*{theorem*}{Theorem}
\newtheorem{lemma}[theorem]{Lemma}
\newtheorem*{conjecture*}{Conjecture}
\newtheorem*{question*}{Question}
\theoremstyle{definition}
\theoremstyle{remark}
\newtheorem*{remark*}{Remark}
\newtheorem*{remarks*}{\bf Remarks}
\newtheorem*{conj*}{Conjecture}
\theoremstyle{definition}
\theoremstyle{plain}
\newtheorem{prop}[theorem]{Proposition}
\numberwithin{equation}{section}
\newcommand{\R}{\mathbb R}
\newcommand{\N}{\mathbb N}
\newcommand{\Z}{\mathbb Z}
\newcommand{\C}{\mathbb C}
\newcommand{\Q}{{\mathbb Q}}
\newcommand*{\rom}[1]{\expandafter\@slowromancap\romannumeral #1@}
\def\({\left(}
\def\){\right)}
\DeclareMathOperator\Log{Log}
\newcommand{\CF}{\mathcal{F}}
\newcommand{\CG}{\mathcal{G}}
\def\del{ \partial}
\numberwithin{equation}{section}
\numberwithin{theorem}{section}
\def\a{\alpha}
\def\b{\beta}
\def\g{\gamma}
\def\l{\lambda}
\def\lp{\left(}
\def\rp{\right)}
\title[Asymptotics for $d$-fold partition diamonds]{Asymptotics for $d$-fold partition diamonds and related infinite products}
\author{Kathrin Bringmann}
\address{Department of Mathematics and Computer Science\\Division of Mathematics\\University of Cologne\\ Weyertal 86-90 \\ 50931 Cologne \\Germany}
\email{kbringma@math.uni-koeln.de}
\author{William Craig}
\address{Department of Mathematics and Computer Science\\Division of Mathematics\\University of Cologne\\ Weyertal 86-90 \\ 50931 Cologne \\Germany}
\email{wcraig@uni-koeln.de}
\author{Joshua Males}
\address{School of Mathematics, University of Bristol, Bristol, BS8 1TW, UK, and the Heilbronn Institute for Mathematical Research, Bristol, UK.}
\email{joshua.males@bristol.ac.uk}
\begin{document}
	
	\begin{abstract}
	We prove an asymptotic formula for the number of $d$-fold partition diamonds of $n$ and their Schmidt-type counterparts. In order to do so, we study the asymptotic behavior of certain infinite products. We also remark on interesting potential connections with mathematical physics and Bloch groups.
	\end{abstract}
	
	\subjclass[2020]{05A16, 11P82}
	
	\keywords{asymptotics, Euler--Maclaurin summation, partitions, partition diamonds}
	
	%\thanks{The first and second authors have received funding from the European Research Council (ERC) under the European Union’s Horizon 2020 research and innovation programme (grant agreement No. 101001179).}
	
\maketitle
	
\section{Introduction and statement of results}

A {\it partition} of a non-negative integer $n$ is a finite sequence $\lambda = \lp a_0, a_1, \cdots, a_k \rp$ of positive integers such that $\left| \lambda \right| := a_0 + a_1 + \dots + a_k = n$. The theory of partitions has a long and rich history in combinatorics and number theory, which is overviewed in Andrews' book \cite{Andrews}. In this paper, we are primarily concerned with the asymptotic properties of partitions. The modern viewpoint on this study began with the famous paper of Hardy and Ramanujan \cite{HR}, in which they studied the function $p(n)$ which counts the number of partitions of $n$ and proved that
\begin{align*}
	p(n) \sim \dfrac{1}{4n\sqrt{3}} e^{\pi \sqrt{\frac{2n}{3}}} \quad \text{(as $n\to\infty$)}.
\end{align*}
They showed this theorem by developing the Circle Method, which has since spawned a huge number of variations with applications across all of analytic number theory.

In 2001, Andrews, Paule, and Riese \cite{APR} reinitiated the study of {partition analysis}, an algebraic framework designed by MacMahon for the deduction of generating functions for different kinds of plane partitions. This began a long series of papers from Andrews and collaborators on this topic, including papers on hypergeometric multisums \cite{AP IV}, magic squares \cite{APRS V} and recently partitions with $n$ copies of $n$ \cite{AP XIV}. In particular, this new research spawned a great interest in plane partition diamonds. A {\it plane partition diamond} (or just a {\it partition diamond}) as defined in \cite{APR VIII} is a pair of sequences of integers $\{a_j\}_{j \geq 0}$, $\{b_j\}_{j \geq 0}$ such that for every $j\in\N_0$ we have $a_j \geq \max\{b_{2j}, b_{2j+1}\}\ge a_{j+1}$. The naming convention of partition diamonds comes from the fact that these can be represented graphically as a kind of directed graph of diamonds, with the direction of edges denoting the inequalities imposed. Partition diamonds have been the subject of many very interesting studies; for example, they give examples of modular forms \cite{AP XIII, APR XI}, and their generalizations exhibit interesting congruence properties \cite{APR VIII, Chan, PR, RS}.

In this paper, we consider a recent generalization of plane partition diamonds. In \cite{DJSW}, a {\it $d$-fold partition diamond} is defined as a collection of non-negative integer sequences $\{a_k\}_{k \geq 0}, \{b_{j,k}\}_{k \geq 0, \, 0 \leq j \leq d-1}$ such that for every $k\in\N_0$, we have the inequalities $a_k \geq \max\limits_{0 \leq j \leq d-1} b_{j,k} \geq a_{k+1}$. Observe that standard integer partitions can be viewed as 1-fold partition diamonds, and the previously defined plane partition diamonds can be viewed as 2-fold partition diamonds. The inequalities exhibited by a $d$-fold partition diamond can also be represented cleanly with a directed graph; we exhibit how this works for $4$-fold diamond partitions below.

\begin{tikzpicture}
	\tikzset{vertex/.style = {shape=circle,draw,minimum size=0.5em}}
	\tikzset{edge/.style = {->,> = latex'}}

	\node[vertex] (a0) [label=above:{{\tiny $a_0$}}] at (0,0) {};
	\node[vertex] (a1) [label=above:{{\tiny $a_1$}}] at (4,0) {};
	\node[vertex] (a2) [label=above:{{\tiny $a_2$}}] at (8,0) {};
	\node[vertex] (a3) [label=above:{{\tiny $a_3$}}] at (12,0) {};
	
	\node[vertex] (b00) [label=above:{{\tiny $b_{0,0}$}}] at (2,3) {};
	\node[vertex] (b10) [label=above:{{\tiny $b_{1,0}$}}] at (2,1) {};
	\node[vertex] (b20) [label=above:{{\tiny $b_{2,0}$}}] at (2,-1) {};
	\node[vertex] (b30) [label=above:{{\tiny $b_{3,0}$}}] at (2,-3) {};
	\node[vertex] (b01) [label=above:{{\tiny $b_{0,1}$}}] at (6,3) {};
	\node[vertex] (b11) [label=above:{{\tiny $b_{1,1}$}}] at (6,1) {};
	\node[vertex] (b21) [label=above:{{\tiny $b_{2,1}$}}] at (6,-1) {};
	\node[vertex] (b31) [label=above:{{\tiny $b_{3,1}$}}] at (6,-3) {};
	\node[vertex] (b02) [label=above:{{\tiny $b_{0,2}$}}] at (10,3) {};
	\node[vertex] (b12) [label=above:{{\tiny $b_{1,2}$}}] at (10,1) {};
	\node[vertex] (b22) [label=above:{{\tiny $b_{2,2}$}}] at (10,-1) {};
	\node[vertex] (b32) [label=above:{{\tiny $b_{3,2}$}}] at (10,-3) {};
	
	\draw[edge] (a0) to (b00);
	\draw[edge] (a0) to (b10);
	\draw[edge] (a0) to (b20);
	\draw[edge] (a0) to (b30);
	\draw[edge] (a1) to (b01);
	\draw[edge] (a1) to (b11);
	\draw[edge] (a1) to (b21);
	\draw[edge] (a1) to (b31);
	\draw[edge] (a2) to (b02);
	\draw[edge] (a2) to (b12);
	\draw[edge] (a2) to (b22);
	\draw[edge] (a2) to (b32);
	
	\draw[edge] (b00) to (a1);
	\draw[edge] (b10) to (a1);
	\draw[edge] (b20) to (a1);
	\draw[edge] (b30) to (a1);
	\draw[edge] (b01) to (a2);
	\draw[edge] (b11) to (a2);
	\draw[edge] (b21) to (a2);
	\draw[edge] (b31) to (a2);
	\draw[edge] (b02) to (a3);
	\draw[edge] (b12) to (a3);
	\draw[edge] (b22) to (a3);
	\draw[edge] (b32) to (a3);
\end{tikzpicture}

In line with recent work on Schmidt-type partitions of $n$ \cite{AP XIV}, we define the {\it Schmidt size} of a $d$-fold partition diamond $\{a_k\}_{k \geq 0}, \{b_{j,k}\}_{k \geq 0,\, 0 \leq j \leq d-1}$ as the size of the subpartition $\{a_k\}_{k\ge0}$; in terms of the directed graph above, the Schmidt size of a $d$-fold partition diamond is the sum of the central nodes. Questions related to Schmidt-style modified size functions on partitions have been popular recently in the theory of partitions \cite{Alladi, dHS, Konan, LY}.

In this paper, we consider the functions that count $d$-fold partition diamonds of size $n$ and Schmidt size $n$ and compute their asymptotic expansions. In line with \cite{DJSW}, we define by $r_d(n)$ the number of $d$-fold partition diamonds of $n$ and we let $s_d(n)$ be the number of $d$-fold partition diamonds with Schmidt size $n$. In order to state these asymptotic formulas, we need to define certain constants. Let
\begin{equation*}
	C_{d}:=\int_{0}^{\infty} \log\left(A_{d}\left(e^{-x}\right)\right)dx,
\end{equation*}
with $A_d(x)$ the Eulerian polynomials defined in \eqref{Eulerian Polynomial Definition}. Then we have the following.

\begin{theorem}\label{Thm: s_d}
	As $n \to \infty$ we have that
	\begin{align*}
			s_d(n) \sim \frac{\left(C_d+\frac{\pi^2(d+1)}6\right)^{\frac d4+\frac12}}{\sqrt2(2\pi)^{\frac d2+1}\sqrt{d!} n^{\frac d4+1}} e^{2\sqrt{\left(C_d+\frac{\pi^2(d+1)}6\right)n}}.
	\end{align*}
\end{theorem}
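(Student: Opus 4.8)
The plan is to pass to the generating function and extract its $n$th coefficient by a Wright-type circle method (equivalently a saddle-point analysis). By the combinatorial description of $d$-fold partition diamonds the sum over the central sequence $\{a_k\}$ decouples over the successive differences $a_k-a_{k+1}$, each contributing a factor $(a_k-a_{k+1}+1)^d$ to the count; together with the Eulerian identity $\sum_{m\ge0}(m+1)^dx^m=A_d(x)/(1-x)^{d+1}$ this yields
\[
\sum_{n\ge0}s_d(n)q^n=\prod_{n\ge1}\frac{A_d(q^n)}{(1-q^n)^{d+1}}.
\]
Thus the whole problem reduces to the radial asymptotics of this infinite product as $q\to1^-$, together with a minor-arc bound -- precisely the type of statement the paper isolates as a general result on infinite products, into which one substitutes $P=A_d$, using $A_d(0)=1$ and $A_d(1)=d!$.

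Set $q=e^{-z}$ with $z$ in a small cone about $\R_{>0}$, and take logarithms, splitting into the Eulerian part $\sum_{n\ge1}\log A_d(q^n)$ and the eta part $-(d+1)\sum_{n\ge1}\log(1-q^n)$. Since $x\mapsto\log A_d(e^{-x})$ is smooth and bounded on $[0,\infty)$ with $\log A_d(1)=\log(d!)$ (the Eulerian numbers of order $d$ sum to $d!$), $\log A_d(0)=0$, and all derivatives decaying exponentially, Euler--Maclaurin summation gives $\sum_{n\ge1}\log A_d(e^{-nz})=\frac{C_d}{z}-\tfrac12\log(d!)+O(z)$, the integral $C_d=\int_0^\infty\log A_d(e^{-x})\,dx$ being convergent. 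The eta part is classical (modular transformation of $\eta$, or Euler--Maclaurin with the logarithmic singularity at $0$ handled separately): $-(d+1)\sum_{n\ge1}\log(1-e^{-nz})=\frac{(d+1)\pi^2}{6z}+\frac{d+1}{2}\log\!\big(\tfrac{z}{2\pi}\big)+O(z)$. With $A:=C_d+\frac{(d+1)\pi^2}{6}$, combining these yields
\[
\sum_{n\ge0}s_d(n)e^{-nz}=\frac{1}{\sqrt{d!}}\Big(\frac{z}{2\pi}\Big)^{\frac{d+1}{2}}e^{A/z}\big(1+o(1)\big)\qquad(z\to0^+),
\]
and the same argument gives the analogous estimate uniformly in the cone.

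For the circle method one also needs that the product is negligible off $q=1$: since $A_d$ has non-negative coefficients, $\big|\prod_nA_d(q^n)\big|\le\prod_nA_d(|q|^n)$, while the classical estimate for $\prod_n(1-q^n)^{-1}$ shows it is exponentially smaller than its value at $|q|$ once $\arg(q)$ is bounded away from $0$. Then, writing $s_d(n)=\frac{1}{2\pi i}\oint q^{-n-1}\big(\sum_{m}s_d(m)q^m\big)\,dq$ and parametrising $q=e^{-z_0-it}$ with the saddle $z_0=\sqrt{A/n}$, the phase $\frac Az+nz$ has minimum $2\sqrt{An}$ and second derivative $2n^{3/2}/\sqrt A$ there, so the Gaussian integral contributes $\sqrt{2\pi/(2n^{3/2}/\sqrt A)}$. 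Assembling this with the prefactor $(z_0/2\pi)^{(d+1)/2}=(A/n)^{(d+1)/4}(2\pi)^{-(d+1)/2}$ and the factor $1/\sqrt{d!}$, and simplifying via $2\sqrt\pi\,(2\pi)^{(d+1)/2}=\sqrt2\,(2\pi)^{d/2+1}$, collapses everything to
\[
s_d(n)\sim\frac{A^{\frac d4+\frac12}}{\sqrt2\,(2\pi)^{\frac d2+1}\sqrt{d!}\;n^{\frac d4+1}}e^{2\sqrt{An}}.
\]

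The main obstacle is making the second step effective and uniform: one must show that the expansion of $\sum_{n\ge1}\log A_d(e^{-nz})$ (and of the eta part) holds with a genuine $o(1)$ error for $z$ in a cone about $\R_{>0}$, not merely along the real axis, which is what a careful Euler--Maclaurin analysis -- or the paper's general proposition on infinite products -- delivers. Granting that uniform asymptotic, the minor-arc estimate and the saddle-point bookkeeping are routine.
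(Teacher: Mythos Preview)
Your argument is correct, and the analysis of the generating function near $q=1$ is essentially identical to the paper's: both take logarithms, apply Euler--Maclaurin to $\sum_{n\ge1}\log A_d(e^{-nz})$ to obtain $\frac{C_d}{z}-\tfrac12\log(d!)+O(z)$ (the paper records this as Lemma~\ref{L:BigF}), and combine with the modular asymptotic \eqref{E:partas} for the $\eta$-part to reach the same expansion of $\sum_n s_d(n)e^{-nz}$ uniformly in a cone.

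The only genuine difference is in how the coefficient is extracted. You run a Wright-type saddle-point/circle-method argument, supplying a minor-arc bound via the non-negativity of the Eulerian coefficients together with the classical estimate for $(q;q)_\infty^{-1}$ away from $q=1$. The paper instead invokes Ingham's Tauberian theorem (Proposition~\ref{P:CorToIngham}), which trades the minor-arc input for the hypothesis that $s_d(n)$ is weakly increasing; this monotonicity is verified by the trivial injection $a_0\mapsto a_0+1$ (Lemma~\ref{L:Weakly Increasing}). Both routes need the same uniform cone asymptotic as input, so neither is materially shorter; the Tauberian route avoids writing out the saddle-point bookkeeping and the minor-arc estimate, while your approach avoids having to check monotonicity and would adapt more readily if one wanted further terms in the asymptotic expansion.
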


Our second main result is the following theorem.

\begin{theorem}\label{Thm: r_d}
	As $n \to \infty$ we have that
	\begin{align*}
		r_d(n) \sim \frac{ \left(\frac{C_d}{d+1} +\frac{\pi^2}{6}\right)^{\frac{1}{2}} e^{\frac{d-1}{2(d+1)} d!} }{2\sqrt{2}\pi d!^{\frac{d}{2(d+1)}}   n } e^{2\sqrt{\left(\frac{C_d}{d+1} + \frac{\pi^2}{6}\right) n}}.
	\end{align*}
\end{theorem}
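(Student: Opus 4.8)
The plan is to derive the generating function for $r_d(n)$, extract its singular behavior near $q=1$, and then apply a saddle-point / Meinardus-type argument (or a suitable Euler--Maclaurin expansion along the lines already needed for Theorem~\ref{Thm: s_d}) to read off the asymptotic. First I would recall from \cite{DJSW} that the generating function for $d$-fold partition diamonds tracking actual size is an explicit infinite product. Roughly speaking, one encodes each ``link'' of the diamond chain: the inner nodes $b_{j,k}$ lying between $a_k$ and $a_{k+1}$ contribute the Eulerian polynomial $A_d$ evaluated at an appropriate power of $q$ (this is the standard combinatorial interpretation of $A_d(q)/(1-q)^{d+1}$ as counting the chain of $d$ inequalities $a_k \ge b_{0,k},\dots,b_{d-1,k}\ge a_{k+1}$ weighted by total sum), while the central chain $a_0\ge a_1\ge\cdots$ contributes the usual partition-type product. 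The upshot is a product of the shape
\begin{equation*}
	\sum_{n\ge0} r_d(n) q^n \;=\; \prod_{k\ge1} \frac{A_d\!\left(q^{k}\right)}{(1-q^{k})^{d+1}} \cdot (\text{possible finite correction}),
\end{equation*}
and I would pin down the exact exponents by comparing with the $d=1$ (ordinary partitions) and $d=2$ (plane partition diamonds) specializations, where the answers are classical.

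Next I would compute the logarithm of this generating function as $q=e^{-t}\to 1^-$, i.e.\ $t\to 0^+$. Writing $\log\prod_{k\ge1}(1-q^k)^{-(d+1)} = (d+1)\sum_{m\ge1}\frac{q^m}{m(1-q^m)}$ gives the leading term $\frac{(d+1)\pi^2}{6t}$ via the standard Mittag-Leffler / Euler--Maclaurin estimate $\sum_{k\ge1}\frac{1}{k}\frac{e^{-kt}}{1-e^{-kt}}\sim \frac{\pi^2}{6t}+\frac12\log t + \cdots$. For the Eulerian factor, $\sum_{k\ge1}\log A_d(e^{-kt})$ is a Riemann sum which by Euler--Maclaurin equals $\frac1t\int_0^\infty \log A_d(e^{-x})\,dx + O(1) = \frac{C_d}{t} + \cdots$, with $C_d$ exactly the constant defined in the excerpt. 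Collecting, $\log\sum r_d(n)q^n \sim \left(C_d + \frac{(d+1)\pi^2}{6}\right)\frac1t$ to leading order; but the normalization in Theorem~\ref{Thm: r_d} has $\frac{C_d}{d+1}+\frac{\pi^2}{6} = \frac{1}{d+1}\left(C_d+\frac{(d+1)\pi^2}{6}\right)$, which signals that the true generating function must actually be $\prod_{k}A_d(q^k)^{1/(d+1)}/(1-q^k)$ — equivalently a product whose ``weight'' is scaled; I would reconcile this by using instead the substitution appropriate to extracting $r_d$, or by noting (as in \cite{DJSW}) that the $r_d$ generating function is genuinely $\prod_{k\ge1} A_d(q^k)/(1-q^k)^{d+1}$ but re-indexed so that the relevant variable picks up only $a$-parts, forcing the $t\mapsto (d+1)t$-type rescaling. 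The $\frac{d-1}{2(d+1)}d!$ and $d!^{-d/(2(d+1))}$ factors in the statement strongly suggest a sub-leading term of the form $-\frac{d}{2(d+1)}\log d! \cdot (\text{something})$ coming from the behaviour of $\log A_d$ near $x=0$ (where $A_d(e^{-x})\sim d!\,x^{-?}$ for the Eulerian polynomial in its standard normalization), so I would track that secondary constant carefully.

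With the full small-$t$ expansion $\log\widehat{R_d}(e^{-t}) = \frac{\kappa}{t} + \alpha\log t + \beta + o(1)$ in hand (where $\kappa = \frac{C_d}{d+1}+\frac{\pi^2}{6}$ and $\alpha,\beta$ are the explicit constants encoding $d!$), the asymptotic for $r_d(n)$ follows from a standard saddle-point integral $r_d(n) = \frac{1}{2\pi i}\oint \widehat{R_d}(q) q^{-n-1}\,dq$: the saddle sits at $t = \sqrt{\kappa/n}$, the Gaussian width is controlled by the second derivative $2\kappa/t^3$, and one obtains
\begin{equation*}
	r_d(n) \sim \frac{e^{\beta}}{2\sqrt{\pi}}\,\kappa^{\frac{\alpha}{2}+\frac14}\, n^{-\frac{\alpha}{2}-\frac34}\, e^{2\sqrt{\kappa n}},
\end{equation*}
and matching $-\frac{\alpha}{2}-\frac34 = -1$ forces $\alpha = \frac12$, consistent with the single $(1-q^k)^{-1}$-type factor after rescaling. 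I would then plug in the computed $e^\beta$ (which should be exactly $\frac{e^{(d-1)d!/(2(d+1))}}{\sqrt{2}\,d!^{d/(2(d+1))}}$ up to the $\kappa$-power bookkeeping) to recover the stated constant. The main obstacle, I expect, is \textbf{not} the saddle-point machinery — which is routine and essentially identical to what is needed for Theorem~\ref{Thm: s_d} — but rather the precise determination of the sub-leading constant $\beta$ in the expansion of $\sum_{k\ge1}\log A_d(e^{-kt})$. This requires a careful Euler--Maclaurin treatment of the function $x\mapsto \log A_d(e^{-x})$, which is smooth and decaying at $+\infty$ but has a logarithmic-type singularity structure at $x=0$ (since $A_d(1)=d!$, one has $\log A_d(e^{-x}) = \log d! + O(x)$, so in fact $x=0$ is harmless and the correction term $\frac12\log A_d(1)\cdot$-type contributions must be handled), so the bookkeeping of exactly which constants land where — and ensuring the $d!$-powers and the $\kappa$-powers combine into the clean closed form stated — is the delicate part. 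I would organize this by first establishing a clean Euler--Maclaurin lemma for $\sum_{k\ge1} f(kt)$ with $f(x)=\log A_d(e^{-x})$, tabulating $\int_0^\infty f$, $f(0)=\log d!$, and $\int_0^\infty (f(x)-f(0)e^{-x})\frac{dx}{x}$-type auxiliary integrals as needed, and only then assembling the final formula.
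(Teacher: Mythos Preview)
Your proposal has a genuine gap at the very first step: you are working with the wrong generating function. The product $\prod_{k\ge1} A_d(q^k)/(1-q^k)^{d+1}$ is the generating function for $s_d(n)$ (the Schmidt-type count), not for $r_d(n)$. The correct generating function, proved in \cite{DJSW} and quoted in the paper as \eqref{eqn: gen function r_d}, is
\[
\sum_{n\ge0} r_d(n)\,q^n \;=\; \prod_{n\ge1} \frac{F_d\!\left(q^{(d+1)(n-1)+1},\,q\right)}{1-q^n},
\]
where $F_d(x,y)$ is the two-variable deformation of the Eulerian polynomials defined by the recursion in Section~\ref{Preliminaries}. Two features of this product immediately resolve the discrepancies you were trying to patch by ad hoc rescalings: (i) there is only a \emph{single} factor of $(1-q^n)^{-1}$, which gives the $\pi^2/6$ directly; (ii) the first argument of $F_d$ runs over the arithmetic progression $(d+1)n+1$, so the Euler--Maclaurin sum has step $(d+1)w$ and produces $\frac{C_d}{(d+1)w}$ as the leading term without any fractional powers of the product. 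Your speculated form $\prod A_d(q^k)^{1/(d+1)}/(1-q^k)$ does not arise.

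The second, more substantive, missing ingredient is the mechanism that produces the sub-leading constant. Because the generating function genuinely depends on the second variable of $F_d$, the Euler--Maclaurin expansion of $\sum_{n\ge0}\log F_d(e^{-((d+1)n+1)w},e^{-w})$ picks up, at order $O(1)$, the integral
\[
-\int_0^\infty \frac{F_d^{(0,1)}(e^{-x},1)}{F_d(e^{-x},1)}\,dx,
\]
coming from differentiating in the \emph{second} slot. This is not visible at all if you only work with the one-variable function $A_d(e^{-x})=F_d(e^{-x},1)$. The paper evaluates this integral via the differential identity $F_d^{(0,1)}(x,1)=\tfrac{dx}{2}\,F_d^{(1,0)}(x,1)$ (Lemma~\ref{F_d Differential Equation}), proved by induction on $d$; this turns the integrand into a total derivative and yields $-\tfrac{d}{2}\log(d!)$. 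Without this lemma there is no route to the constants $d!^{-d/(2(d+1))}$ and $e^{\frac{d-1}{2(d+1)}d!}$ in the statement. Your plan of extracting everything from the one-variable expansion of $\log A_d(e^{-x})$ near $x=0$ cannot succeed, because that expansion simply does not contain the $y$-derivative information. (The final passage from the asymptotic of the generating function to that of $r_d(n)$ is done in the paper via Ingham's Tauberian theorem rather than a direct saddle-point computation, but that difference is cosmetic.)
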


The remainder of our paper is laid out as follows. In Section \ref{Asymptotic techniques} we outline the main asymptotic techniques we apply in our analysis. In Section \ref{Preliminaries}, we explain certain preliminary facts about Eulerian polynomials and certain two-variable deformations of Eulerian polynomials given in \cite{DJSW}, and give evaluations of certain integrals which emerge in the process of proving the main theorems. In Sections \ref{Proof of Theorem 1} and \ref{Proof of Theorem 2}, we prove Theorems \ref{Thm: s_d} and \ref{Thm: r_d}, respectively, as well as very broad generalizations of these results. Finally, in Section \ref{Final Remarks} we discuss some final remarks, including possible applications to physics and connections of certain constants in our formulas with an open question about Bloch groups.

\section*{Acknowledgments}

The first and second authors have received funding from the European Research Council (ERC) under the European Union's Horizon 2020 research and innovation programme (grant agreement No. 101001179). The authors thank Steven Charlton, Caner Nazaroglu, and Don Zagier for helpful conversations.

\section{Asymptotic techniques} \label{Asymptotic techniques}

\subsection{A variation of Euler--Maclaurin summation}

We say that a function $f$ is of {\it sufficient decay} in an (unbounded) domain $D\subset\C$ if there exists some $\varepsilon > 0$ such that $f(w) \ll w^{-1-\varepsilon}$ as $|w| \to \infty$ in $D$. We need to use a version of Euler--Maclaurin summation which has been popularized by Zagier \cite{Z-EM}. We quote Theorem 1.2 of \cite{BJM} which follows from the Euler--Maclaurin summation formula.

\begin{prop}\label{Theorem:EulerMaclaurin1DShifted}
	Suppose that $0\le \theta < \frac{\pi}{2}$ and let
	$D_\theta := \{ re^{i\alpha} : r\ge0 \mbox{ and } |\alpha|\le \theta  \}$.
	Let $f:\C\rightarrow\C$ be holomorphic in a domain containing
	$D_\theta$, so that in particular $f$ is holomorphic at the origin, and
	assume that $f$ and all of its derivatives are of sufficient decay.
	Then for $a\in\mathbb{R}$ and $N\in\N_0$,
	\begin{equation*}
		\sum_{m\geq0}f((m+a)w) = \frac{I_{f}}{w} - \sum_{n=0}^{N-1} \frac{B_{n+1}(a) f^{(n)}(0)}{(n+1)!}w^n + O_N\left(w^N\right),
	\end{equation*}
	uniformly, as $w\rightarrow0$ in $D_\theta$. Here $I_{f}:=\int_0^\infty f(x) dx$.
\end{prop}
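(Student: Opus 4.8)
Although the statement is quoted from \cite{BJM}, here is the plan I would follow to establish it from the classical Euler--Maclaurin summation formula. The argument proceeds in two stages: first prove the expansion for a positive real argument $w = x > 0$, then bootstrap it to the full cone $D_\theta$ by rotating the contour, all the while tracking the uniformity of the error term.

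For the real case, fix a function $g$ holomorphic on a neighborhood of $[0,\infty)$ with $g$ and all its derivatives of sufficient decay, and set $h(t) := g(xt)$. Applying the Euler--Maclaurin formula with integral remainder (as recalled, e.g., in \cite{Z-EM}) to $\sum_{0 \le m < M} h(m+a)$, letting $M \to \infty$ (the decay of $h$ and its derivatives kills every term located at $M + a$), and then resumming the resulting $a$-dependence — the boundary terms at $a$, Taylor-expanded at $0$, together with $\int_a^\infty = \int_0^\infty - \int_0^a$ — against the Bernoulli numbers via $\sum_{n \ge 0} B_n(a)\tfrac{t^n}{n!} = \tfrac{t e^{at}}{e^t - 1}$, one arrives at the identity
\[
\sum_{m \ge 0} h(m+a) \;=\; \int_0^\infty h(t)\,dt \;-\; \sum_{n=0}^{N-1}\frac{B_{n+1}(a)}{(n+1)!}\,h^{(n)}(0) \;+\; \frac{(-1)^N}{N!}\int_0^\infty \overline{B_N}(a-t)\,h^{(N)}(t)\,dt,
\]
where $\overline{B_N}(y) := B_N(y - \lfloor y \rfloor)$; that the genuine polynomial $B_{n+1}(a)$ appears, rather than its periodization, is forced by consistency under $a \mapsto a+1$. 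Since $\int_0^\infty h(t)\,dt = x^{-1}\int_0^\infty g(u)\,du$ and $h^{(n)}(0) = x^n g^{(n)}(0)$, the first two terms already match the target. For the remainder, the substitution $u = xt$ rewrites it as $\tfrac{(-1)^N}{N!}\,x^{N-1}\int_0^\infty \overline{B_N}(a - u/x)\,g^{(N)}(u)\,du$; the naive bound yields only $O(x^{N-1})$, so I would integrate by parts once more, using that $\overline{B_N}$ has mean zero for $N \ge 1$ (so $\overline{B_{N+1}}/(N+1)$ is a bounded antiderivative), which gains an extra factor $x$ and lands at $O(x^N)$, with implied constant controlled by $\int_0^\infty |g^{(N)}|$, $\int_0^\infty |g^{(N+1)}|$ and one boundary value.

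Next I would pass to a general $w = re^{i\alpha} \in D_\theta$ by applying the real case to $g := g_\alpha$, where $g_\alpha(t) := f(e^{i\alpha}t)$. This $g_\alpha$ is holomorphic on a neighborhood of $[0,\infty)$ and, together with all its derivatives, of sufficient decay uniformly for $|\alpha| \le \theta$, since $f$ and its derivatives decay uniformly inside $D_\theta$ (and are continuous on the compact set $D_\theta \cap \{|w| \le 1\}$). One has $\sum_{m \ge 0} f((m+a)w) = \sum_{m \ge 0} g_\alpha((m+a)r)$ and $g_\alpha^{(n)}(0)\,r^n = (re^{i\alpha})^n f^{(n)}(0) = w^n f^{(n)}(0)$, so the left-hand side and the polynomial part transform exactly as needed; and deforming the ray of integration from $\arg = 0$ to $\arg = \alpha$ across the circular arc of radius $R$, whose contribution is $\ll R^{-\varepsilon} \to 0$ by sufficient decay, Cauchy's theorem gives $\int_0^\infty f(e^{i\alpha}t)\,dt = e^{-i\alpha} I_f$, i.e.\ $r^{-1}\int_0^\infty g_\alpha(u)\,du = I_f/w$. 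Since the real-case error $O_N(r^N) = O_N(|w|^N)$ is uniform in $\alpha$ — its constant depending only on the uniformly bounded integrals above — assembling these facts yields the claim.

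The real-variable Euler--Maclaurin formula being classical, the load-bearing parts are the two features that make this statement usable in practice: (i) the passage to the complex cone, which rests on holomorphy together with the \emph{sufficient decay} hypothesis to discard the arc at infinity; and, more delicately, (ii) the \emph{uniformity} of the error as $w \to 0$ in $D_\theta$, which is exactly where it matters that sufficient decay is imposed uniformly over all directions $\arg w$ inside the cone, not merely along individual rays. A smaller but genuine point is extracting the sharp remainder $O(w^N)$ in place of $O(w^{N-1})$, which requires the mean-zero property of the periodic Bernoulli functions — equivalently, one further integration by parts.
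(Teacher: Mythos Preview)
The paper does not prove this proposition at all: it is quoted verbatim as Theorem~1.2 of \cite{BJM}, with only the remark that it ``follows from the Euler--Maclaurin summation formula.'' So there is no in-paper argument to compare against; your sketch is filling in exactly what the paper delegates to the reference.

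Your plan is correct and is the standard derivation. The real-variable identity you write down is valid (with the genuine $B_{n+1}(a)$, obtained as you indicate by applying Euler--Maclaurin to $H(t):=h(t+a)$ and then re-expanding the boundary data at $a$ around $0$ via the generating function for $B_n(a)$), and the passage to $D_\theta$ by setting $g_\alpha(t)=f(e^{i\alpha}t)$ together with a Cauchy rotation of $\int_0^\infty$ is exactly the right mechanism; the uniformity in $\alpha$ of $\int_0^\infty |g_\alpha^{(k)}|$ is precisely where ``sufficient decay in $D_\theta$'' is spent, as you note. One small simplification: rather than performing the extra integration by parts to upgrade the naive $O(x^{N-1})$ remainder to $O(x^N)$, you can equivalently just invoke your displayed identity with $N$ replaced by $N+1$ and absorb the additional term $-\tfrac{B_{N+1}(a)}{(N+1)!}h^{(N)}(0)=O(x^N)$ into the error; the two maneuvers are the same computation, but the latter phrasing avoids having to chase the boundary contribution. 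Also, your formula as written is for $N\ge 1$; the case $N=0$ of the proposition is immediate from $N=1$.
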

We also require the more general result, which is given in the proof of Theorem 1.2 of \cite{BJM} (see equation (5.8) there).
\begin{prop}\label{Thm: Euler-Mac full}
	Assume the conditions from Proposition \ref{Theorem:EulerMaclaurin1DShifted} are satisfied. We have for any $N \in \mathbb{N}$ that
	\begin{multline*}%\label{Eq:EulerMaclaurinShifted2}
		\sum_{m\geq0} f((m+a)w) = \frac{I_f}{w} - \sum_{n=0}^{N-1} \frac{B_{n+1}(a)f^{(n)}(0)}{(n+1)!} w^n - \sum_{k\ge N} \frac{f^{(k)}(0)a^{k+1}}{(k+1)!}w^k\\
		- \frac{w^N}{2\pi i} \sum_{n=0}^{N-1} \frac{B_{n+1}(0)a^{N-n}}{(n+1)!} \int_{C_R(0)} \frac{f^{(n)}(z)}{z^{N-n}(z-aw)} dz - (-w)^{N-1} \hspace{-0.1cm} \int_{aw}^{w\infty} \frac{f^{(N)}(z)\widetilde B_N\left(\frac zw-a\right)}{N!} dz,
	\end{multline*}
	where $\widetilde B_n(x):=B_n(x-\lfloor x\rfloor)$ and $C_R(0)$ denotes the circle of radius $R$ centred at the origin, where $R$ is such that $f$ is holomorphic in $C_{R}(0)$.
\end{prop}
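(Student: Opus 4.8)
The plan is to obtain this as a refinement of Proposition~\ref{Theorem:EulerMaclaurin1DShifted}: instead of absorbing all contributions of size $O_N(w^N)$ into a single error term, I would keep each of them explicit. Set $G(t) := f((t+a)w)$, so that the sum in question is $\sum_{m\ge 0}G(m)$, and observe that for $w\in D_\theta$ one has $G^{(k)}(t) = w^k f^{(k)}((t+a)w)$, which by the sufficient-decay hypothesis on $f$ and all its derivatives tends to $0$ rapidly as $t\to\infty$. Hence I can apply the classical Euler--Maclaurin formula with periodic Bernoulli-polynomial remainder on $[0,\infty)$, all boundary terms at $\infty$ vanishing, to get
\[
\sum_{m\ge 0}G(m) = \int_0^\infty G(t)\,dt - \sum_{n=0}^{N-1}\frac{B_{n+1}(0)}{(n+1)!}G^{(n)}(0) + \frac{(-1)^{N-1}}{N!}\int_0^\infty \widetilde B_N(t)\,G^{(N)}(t)\,dt.
\]

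The bulk of the work is then rewriting each of these three pieces in terms of $f$ and $w$. For the main integral, substituting $u=(t+a)w$ gives $\int_0^\infty G(t)\,dt = \tfrac1w\int_{aw}^\infty f(u)\,du = \tfrac{I_f}w - \tfrac1w\int_0^{aw} f(u)\,du$; expanding $f$ into its Taylor series about $0$ (convergent on $[0,aw]$ for $|w|$ small) and integrating termwise yields $\tfrac{I_f}w - \sum_{k\ge0}\tfrac{f^{(k)}(0)a^{k+1}}{(k+1)!}w^k$, whose tail $\sum_{k\ge N}$ is precisely the third sum in the claimed identity (the first $N$ monomials cancelling at the end). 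For the remainder integral, the same substitution $z=(t+a)w$ rewrites it as an integral over the ray from $aw$ to $w\infty$, producing a term of the shape $(-w)^{N-1}\int_{aw}^{w\infty}\tfrac{f^{(N)}(z)}{N!}\widetilde B_N(\tfrac zw-a)\,dz$, whose convergence follows from the decay of $f^{(N)}$ and the boundedness of $\widetilde B_N$. For the Bernoulli boundary sum, I would write $G^{(n)}(0)=w^n f^{(n)}(aw)$ and expand $f^{(n)}(aw)$ by Cauchy's integral formula over a circle $C_R(0)$ on and inside which $f$ is holomorphic and with $|aw|<R$, using the finite geometric identity $\tfrac1{z-aw}=\sum_{j=0}^{N-n-1}\tfrac{(aw)^j}{z^{j+1}}+\tfrac{(aw)^{N-n}}{z^{N-n}(z-aw)}$; this splits $-\sum_{n=0}^{N-1}\tfrac{B_{n+1}(0)}{(n+1)!}G^{(n)}(0)$ into a polynomial part $-\sum_{n,j}\tfrac{B_{n+1}(0)a^j f^{(n+j)}(0)}{(n+1)!\,j!}w^{n+j}$ plus exactly the stated contour-integral term.

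The last step is combinatorial: putting $m=n+j$ in the polynomial part and using $\sum_{n=0}^m\binom{m+1}{n+1}B_{n+1}(0)\,a^{m-n}=B_{m+1}(a)-a^{m+1}$ (a rearrangement of $B_{m+1}(x)=\sum_{i=0}^{m+1}\binom{m+1}{i}B_ix^{m+1-i}$), the $-a^{m+1}$ terms cancel the leftover monomials from $\int_0^\infty G$, leaving exactly $-\sum_{m=0}^{N-1}\tfrac{B_{m+1}(a)f^{(m)}(0)}{(m+1)!}w^m$. I expect the main obstacle to be not any single estimate but the bookkeeping: keeping straight which Taylor expansion is taken exactly (the one for $\int_0^{aw}f$) versus which are truncated with a Cauchy remainder (those for the $f^{(n)}(aw)$), fixing the truncation order at $N-n$ so that every contour term carries a clean factor $w^N$, tracking the signs through the integrations by parts defining the periodic-Bernoulli remainder, and verifying that $R$ and $|w|$ can be chosen compatibly with the holomorphy and decay hypotheses so that the termwise integrations and the convergence of the ray integral are legitimate.
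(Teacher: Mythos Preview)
The paper does not supply its own proof of this proposition; it is quoted verbatim from equation~(5.8) in the proof of Theorem~1.2 of \cite{BJM}. Your derivation---apply the classical Euler--Maclaurin formula with periodic-Bernoulli remainder to $G(t)=f((t+a)w)$, then unwind each of the three pieces via the substitution $z=(t+a)w$, a Taylor expansion of $\int_0^{aw}f$, and Cauchy's integral formula combined with the finite geometric expansion of $1/(z-aw)$, finishing with the binomial identity for $B_{m+1}(a)$---is correct and is precisely the argument carried out in \cite{BJM} to obtain this exact formula. So there is nothing to compare: you have reconstructed the cited proof.
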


\subsection{Ingham's Tauberian theorem}

In order to compute the asymptotic behavior of the coefficients $s_d(n)$ and $r_d(n)$ as $n \to \infty$, we make use of a Tauberian Theorem variant proved by Jennings-Shaffer, Mahlburg, and the first author, following work of Ingham. In essence, for generating functions carrying certain analytic properties\footnote{The second condition is often dropped in \eqref{E:as1} which makes the proposition unfortunately incorrect (see \cite{BJM}).} this gives an easy-to-use method of obtaining the main-term asymptotic of its Fourier coefficients. We quote the special case $\a=0$ of Theorem 1.1 of \cite{BJM}, which follows from Ingham's Theorem \cite{I}.

\begin{prop}\label{P:CorToIngham}
	Let $B(q)=\sum_{n\ge0}b(n)q^n$ be a power series with non-negative real coefficients and radius of convergence at least one and that the $b(n)$ are weakly increasing. Assume that $\l$, $\b$, $\g\in\R$ with $\g>0$ exist such that
	\begin{equation}\label{E:as1}
		B\left(e^{-t}\right) \sim \l t^\b e^\frac\g t \quad\text{as } t \to 0^+,\qquad B\left(e^{-z}\right) \ll |z|^\b e^\frac{\g}{|z|} \quad\text{as } z \to 0,
	\end{equation}
	with $z=x+iy$ ($x,y\in\R,x>0$) in each region of the form $|y|\le\Delta x$ for $\Delta>0$. Then
	\begin{align*}
	b(n) \sim \frac{\l\g^{\frac\b2+\frac14}}{2\sqrt\pi n^{\frac\b2+\frac34}}e^{2\sqrt{\g n}} \qquad\text{as } n \to \infty.
	\end{align*}
\end{prop}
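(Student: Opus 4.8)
The plan is to run the circle (saddle-point) method directly on the Cauchy integral for $b(n)$. For $0<r<1$ one has
\[
b(n)=\frac{1}{2\pi i}\oint_{|q|=r}\frac{B(q)}{q^{n+1}}\,dq=\frac{r^{-n}}{2\pi}\int_{-\pi}^{\pi}B(re^{i\theta})e^{-in\theta}\,d\theta ,
\]
and the first step is to take $r=e^{-x_n}$ with $x_n:=\sqrt{\gamma/n}$, the saddle point of $x\mapsto \tfrac{\gamma}{x}+nx$ (this is forced by the heuristic $b(n)\approx e^{nx}B(e^{-x})$ minimized over $x$, and at $x=x_n$ one has $\tfrac{\gamma}{x_n}+nx_n=2\sqrt{\gamma n}$ and $n=\gamma/x_n^{2}$). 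I would then split the $\theta$-integral into a \emph{major arc} $|\theta|\le \theta_0:=x_n^{1+\delta}$ for a small fixed $\delta\in(0,\tfrac12)$ and a \emph{minor arc} $\theta_0<|\theta|\le\pi$.

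On the major arc, set $z=x_n-i\theta$; then $|\theta|/x_n\le x_n^{\delta}\to0$, so $z$ lies in a sector about the positive real axis whose opening shrinks to $0$, and I would feed in the hypotheses \eqref{E:as1}, which give $B(e^{-z})=\lambda z^{\beta}e^{\gamma/z}(1+o(1))$ uniformly on such shrinking sectors — this is exactly where the complex-variable input in \eqref{E:as1} is indispensable, and its omission is what makes the naive statement false (cf.\ the footnote). Expanding $\gamma/z$ to second order about $x_n$ and using $n=\gamma/x_n^{2}$ to cancel the term linear in $\theta$, one gets a Gaussian, $e^{nx_n}B(e^{-z})e^{-in\theta}\sim\lambda x_n^{\beta}e^{2\sqrt{\gamma n}}e^{-\frac{n^{3/2}}{\sqrt\gamma}\theta^{2}}$; since $n^{3/2}\theta_0^{2}\to\infty$ the Gaussian may be completed to an integral over $\mathbb{R}$ at negligible cost, giving
\begin{align*}
\frac{e^{nx_n}}{2\pi}\int_{-\theta_0}^{\theta_0}B(e^{-z})e^{-in\theta}\,d\theta
&\sim \frac{\lambda x_n^{\beta}e^{2\sqrt{\gamma n}}}{2\pi}\sqrt{\frac{\pi\sqrt\gamma}{n^{3/2}}}\\
&=\frac{\lambda\,\gamma^{\frac\beta2+\frac14}}{2\sqrt\pi\, n^{\frac\beta2+\frac34}}e^{2\sqrt{\gamma n}},
\end{align*}
which is precisely the asserted main term. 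Tracking the error terms here (truncating the Taylor expansion of $\gamma/z$, and the $o(1)$ from \eqref{E:as1}) is routine.

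It remains to show the minor arc is of lower order. For $\theta_0<|\theta|\le\pi$ one has $|z|=\sqrt{x_n^{2}+\theta^{2}}\ge x_n\sqrt{1+x_n^{2\delta}}$, hence $\gamma/|z|\le \sqrt{\gamma n}-c\sqrt{\gamma n}\,x_n^{2\delta}$ for an absolute $c>0$; the complex-domain bound in \eqref{E:as1} then gives $e^{nx_n}|B(e^{-z})|\ll |z|^{\beta}e^{2\sqrt{\gamma n}}e^{-c\sqrt{\gamma n}\,x_n^{2\delta}}$, and since $\sqrt{\gamma n}\,x_n^{2\delta}=\gamma^{\frac12+\delta}n^{\frac12-\delta}\to\infty$ this is $o$ of the main term. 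The genuinely delicate point is that the bound in \eqref{E:as1} is uniform only on each fixed cone $|y|\le\Delta x$, whereas the far part of the minor arc ($|\theta|\asymp1$) escapes every fixed cone as $n\to\infty$; there one must instead invoke the Tauberian hypotheses. Non-negativity gives $|B(e^{-x_n+i\theta})|\le B(e^{-x_n})$ for free, and weak monotonicity of the $b(n)$ forces the measure $m\mapsto b(m)e^{-mx_n}$ to have its mass spread over $\gg 1/x_n$ consecutive integers, so that $\sum_m b(m)e^{-mx_n}e^{im\theta}$ exhibits real cancellation for $|\theta|\gg x_n$, enough to dominate the factor $e^{nx_n}$. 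Quantifying this minor-arc cancellation uniformly in $\theta$ — equivalently, understanding exactly why neither the second condition in \eqref{E:as1} nor the monotonicity can be dropped — is the step I expect to be the main obstacle; granting it, assembling the two arcs yields the stated asymptotic, as done in \cite{BJM} following Ingham \cite{I}.
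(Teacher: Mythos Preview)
The paper does not give its own proof of this proposition; it simply quotes the $\alpha=0$ case of Theorem~1.1 of \cite{BJM}, which in turn follows Ingham~\cite{I}. So the relevant comparison is between your sketch and the Ingham/BJM argument.

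Your major-arc step contains a genuine gap. You assert that the hypotheses in \eqref{E:as1} yield
\[
B\!\left(e^{-z}\right)=\lambda z^{\beta}e^{\gamma/z}\bigl(1+o(1)\bigr)
\]
uniformly on the shrinking sector $|\theta|\le x_n^{1+\delta}$, but they do not. The first condition is an asymptotic \emph{only for real} $t\to 0^{+}$; the second is merely an upper bound $|B(e^{-z})|\ll |z|^{\beta}e^{\gamma/|z|}$ on fixed cones, and note that $e^{\gamma/|z|}$ is genuinely larger than $|e^{\gamma/z}|=e^{\gamma x/|z|^{2}}$ off the real axis. Neither statement, nor their combination with non-negativity of the coefficients, pins down the \emph{phase} of $B(e^{-z})$ on the major arc. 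Yet your saddle-point computation hinges on exactly that phase: it is the oscillation $e^{i\gamma\theta/x_n^{2}}=e^{in\theta}$ coming from $e^{\gamma/z}$ that cancels $e^{-in\theta}$ and produces the Gaussian. Without the complex asymptotic you cannot justify this cancellation, and the main-term calculation collapses.

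The Ingham/BJM proof avoids this obstacle by a Tauberian route rather than a direct saddle. One first obtains the asymptotic for a \emph{smoothed} quantity (a weighted partial sum of the $b(m)$), for which the Cauchy integral can be controlled using only the real asymptotic together with the complex upper bound; the kernel absorbs the lack of phase information. The passage from the smoothed sum back to $b(n)$ itself is where weak monotonicity is decisive: it lets one sandwich $b(n)$ between nearby smoothed averages with negligible loss. Thus monotonicity is not primarily a minor-arc device as you suggest, but the mechanism that compensates for having only an upper bound, not an asymptotic, off the real line. If you want to make a direct circle-method proof work under exactly the stated hypotheses, this unsmoothing idea is the missing ingredient.
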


To use Proposition \ref{P:CorToIngham} to study the asymptotic growth of $r_d(n)$ and $s_d(n)$, we need to verify that these are weakly increasing. We quickly prove that these properties hold.

\begin{lemma} \label{L:Weakly Increasing}
	For $d\in\N$, the sequences $s_d(n)$ and $r_d(n)$ are weakly increasing.
\end{lemma}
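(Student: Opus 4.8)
The plan is to exhibit, for each $n$, an injection from the set of $d$-fold partition diamonds of size $n$ (respectively Schmidt size $n$) into the corresponding set of size $n+1$ (respectively Schmidt size $n+1$). Since both $s_d(n)$ and $r_d(n)$ count finite sets, producing such injections immediately yields $s_d(n)\le s_d(n+1)$ and $r_d(n)\le r_d(n+1)$, which is exactly the weak monotonicity we need to invoke Proposition \ref{P:CorToIngham}. The natural map to try in both cases is to increase the value of $a_0$ by $1$, i.e. send $\bigl(\{a_k\}_{k\ge0},\{b_{j,k}\}\bigr)$ to $\bigl(\{a_k'\}_{k\ge0},\{b_{j,k}\}\bigr)$ where $a_0':=a_0+1$ and $a_k':=a_k$ for $k\ge1$.

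First I would check that this map is well-defined, i.e. that the image is again a valid $d$-fold partition diamond. The only inequalities involving $a_0$ are $a_0\ge\max_{0\le j\le d-1}b_{j,0}$ (the inequality $b_{j,-1}\ge a_0$ does not exist, since the sequences are indexed by $k\ge0$), and increasing $a_0$ by $1$ only makes this inequality easier to satisfy; all other defining inequalities are untouched. Next, the map clearly changes the Schmidt size $\sum_k a_k$ by exactly $1$, so it sends Schmidt size $n$ to Schmidt size $n+1$; this handles $s_d(n)$. For $r_d(n)$, the total size is $\sum_k a_k+\sum_{k,j}b_{j,k}$, which again increases by exactly $1$, so the map sends size $n$ to size $n+1$; this handles $r_d(n)$. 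Finally, injectivity is immediate: the map is invertible on its image by subtracting $1$ from $a_0$ (one recovers the original diamond), and distinct inputs have distinct $a_0$-shifted outputs.

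One small point to be careful about is the degenerate structure of these objects: a $d$-fold partition diamond is an infinite sequence which is eventually zero (otherwise the size would be infinite), so $a_0=0$ forces all entries to be zero and $n=0$; in general one should note that $r_d(n)$ and $s_d(n)$ are finite for every $n$, which follows because any diamond of size (or Schmidt size) $n$ has $a_0\le n$ and is supported on finitely many indices $k$. None of this causes trouble for the argument. The main (and essentially only) obstacle is simply to confirm that the combinatorial definitions are set up so that bumping $a_0$ never violates a constraint; since $a_0$ sits at the ``top'' of the partial order with nothing forcing it from above, this is immediate, and the lemma follows. (Alternatively, one could argue at the level of generating functions, using the product formulas for $\sum_n r_d(n)q^n$ and $\sum_n s_d(n)q^n$ from \cite{DJSW} together with the fact that each factor $\frac{1}{1-q^k}$ or $A_d(q^k)/(1-q^k)^{d+1}$ has non-negative, weakly increasing coefficients and that such power series are closed under multiplication; but the bijective argument is cleaner.)
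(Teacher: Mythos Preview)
Your argument is correct and is exactly the same as the paper's: both construct the injection $\mathcal R_d(n)\hookrightarrow\mathcal R_d(n+1)$ (and likewise for $\mathcal S_d$) by adding $1$ to $a_0$ and leaving all other parts fixed. You simply spell out the well-definedness, size change, and injectivity checks that the paper leaves implicit.
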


\begin{proof}
	Let $\mathcal R_d(n)$ and $\mathcal S_d(n)$ be the collections of $d$-fold partition diamonds of size and Schmidt-size $n$, respectively, so that $r_d(n) = \left| \mathcal R_d(n) \right|, s_d(n) = \left| \mathcal S_d(n) \right|$. It is enough to construct injections $\mathcal R_d(n) \hookrightarrow \mathcal R_d(n+1)$ and $\mathcal S_d(n) \hookrightarrow \mathcal S_d(n+1)$. Such a map is immediately furnished in both cases by that function which takes a $d$-fold diamond partition $(\{a_k\}_{k \geq 0}, \{b_{j,k}\}_{j,k})$ and adds $1$ to $a_0$ and leaves all other part sizes fixed.
\end{proof}

\section{Preliminaries} \label{Preliminaries}

\subsection{Asymptotics of the $q$-Pochhammer symbol}

We recall the famous asymptotic formula for the inverse of $q$-Pochhammer symbol, which follows from the modularity of the Dedekind $\eta$-function, and is given by
\begin{equation}\label{E:partas}
	\frac{1}{\left(e^{-z};e^{-z}\right)_\infty} \sim \sqrt\frac{z}{2\pi}e^\frac{\pi^2}{6z} \qquad\text{as } z \to 0.
\end{equation}

\subsection{Eulerian polynomials}\label{Sec: Eulerain polys}

We consider here the {\it Eulerian polynomials}, which we denote by $A_d(x)$, and some of their basic properties. For more properties and proofs, see  \cite[26.14]{DLMF}. These polynomials can be defined by the power series identity
\begin{equation} \label{Eulerian Polynomial Definition}
	\sum_{j \geq 0} \lp j+1 \rp^d x^j = \frac{A_d(x)}{\lp 1 - x \rp^{d+1}}.
\end{equation}
Based on this property, these polynomials can also be defined recursively by $A_0(x) = 1$ and for each $d\in\N$,
\begin{equation} \label{Eulerian Polynomial Recursion}
	A_d(x) = \lp 1 + (d-1)x \rp A_{d-1}(x) + x(1-x) A_{d-1}^\prime(x).
\end{equation}
The first few Eulerian polynomials are
\begin{equation*}
	A_{1}(x)=1, \quad A_{2}(x)=1+x, \quad A_{3}(x)=1+4x+x^{2}.
\end{equation*}
We require a few special values of these polynomials. In particular, by induction on $d$ it is not hard to prove that 
\begin{align}\label{eqn: A_d(1)}
	A_d(1) = d!.
\end{align}
We also obtain by differentiating \eqref{Eulerian Polynomial Recursion} and induction on $d$ that\footnote{We note that $A_d^\prime(1)$ are known as the {Lah numbers} (OEIS A001286).}
\begin{equation} \label{Eulerian Polynomial Special Values}
	A_d^\prime(1) = \frac{(d-1) \cdot d!}{2}.
\end{equation}
We need to use an important and well-known symmetry property of the Eulerian polynomials: for $d\in\N$, we have
\begin{equation}\label{Eulerian Polynomial Symmetry}
	A_d(x) = x^{d-1} A_d\left(\frac1x\right).
\end{equation}
We make use of the following lemma, which follows directly from \eqref{Eulerian Polynomial Definition}.

\begin{lemma} \label{L:A_d Zeros}
	For $d\in\N$, $A_d(x)$ has no zeros in $[0,1]$.
\end{lemma}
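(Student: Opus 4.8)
The plan is to exploit the power series identity \eqref{Eulerian Polynomial Definition} directly: since $(1-x)^{d+1} > 0$ for $x \in [0,1)$, the sign of $A_d(x)$ on $[0,1)$ is the same as the sign of $\sum_{j\ge0}(j+1)^d x^j$, which is a power series with strictly positive coefficients. Hence $A_d(x) > 0$ on $[0,1)$, and in particular $A_d(x)$ has no zeros there. The only remaining point is $x=1$, where \eqref{eqn: A_d(1)} gives $A_d(1) = d! > 0$ for $d \in \N$. Combining these two observations yields $A_d(x) > 0$ throughout $[0,1]$, so it has no zeros on that interval.

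**First I would** note that this is essentially a one-line argument and the only subtlety is the endpoint $x=1$, where the generating-function identity is a priori only a formal/convergent-on-$|x|<1$ statement and does not directly evaluate $A_d$ at $1$; this is precisely why one invokes \eqref{eqn: A_d(1)} separately. So the steps, in order, are: (i) observe $\sum_{j\ge0}(j+1)^d x^j > 0$ for $x \in [0,1)$ since all coefficients are positive and at least the $j=0$ term ($=1$) contributes; (ii) multiply by $(1-x)^{d+1} > 0$ to conclude $A_d(x) > 0$ on $[0,1)$; (iii) invoke $A_d(1)=d!>0$ to handle $x=1$.

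**The main obstacle**, such as it is, is purely expository rather than mathematical: one must be careful to state the argument so that it is valid for all $d\in\N$ (the case $d=1$ gives $A_1(x)=1$, trivially nonzero), and to make clear that the vanishing of $(1-x)^{d+1}$ at $x=1$ is exactly what forces us to treat the endpoint by a different means. No estimates, induction, or analytic continuation are needed beyond what \eqref{Eulerian Polynomial Definition} and \eqref{eqn: A_d(1)} already provide.
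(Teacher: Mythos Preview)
Your argument is correct and is exactly what the paper has in mind: the text simply records that the lemma ``follows directly from \eqref{Eulerian Polynomial Definition},'' and your write-up spells out precisely that deduction, together with the endpoint value $A_d(1)=d!$ from \eqref{eqn: A_d(1)}. Nothing further is needed.
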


%\begin{proof}
%	This will be a consequence of a combinatorial interpretation of $A_{d,k}$. In particular, let $S_n$ be the symmetric group, viewed here as the collection of bijective functions on the set. For each $\sigma \in S_n$ we say that an {\it exceedance} in $\sigma$ is an index $1 \leq j \leq n$ such that $\sigma(j) < \sigma(j+1)$. Then \cite[26.14]{DLMF} we see that if $A_{d,k}$ is the number of $\sigma \in S_d$ with $k$ exceedances, then
%	\begin{align*}
	%		A_d(x) = \sum_{k=0}^{d-1} A_{d,k} x^k.
	%	\end{align*}
%	It is then clear to see that $A(0) = 1$ and that all coefficients of $A_d(x)$ are non-negative. It is then clear that $A_d(x)$ does not have any zeros anywhere on the positive real line, and in particular not in the range $[0,1]$.
%\end{proof}

\subsection{Deformed Eulerian polynomials}

In order to analyze $d$-fold partition diamonds, we need to consider certain polynomials $F_d(x,y)$ which were introduced in \cite{DJSW}. These polynomials are defined recursively by
\begin{equation*} %\label{F_d()}
	F_1(x,y) = 1, \ \ \ F_d(x,y) = \frac{\lp 1 - xy^d \rp F_{d-1}(x,y) - y\lp 1-x \rp F_{d-1}(xy,y)}{1 - y}.
\end{equation*}
The first cases are
\begin{equation*}
	F_{2}(x,y)=1+xy, \quad F_{3}(x,y)=1+2xy+2xy^{2}+x^{2}y^{3}.
\end{equation*}
For later convenience, we set
\begin{equation*}
	H_d(x,y) := \lp 1 - xy^d \rp F_{d-1}(x,y) - y\lp 1-x \rp F_{d-1}(xy,y),
\end{equation*}
so that $F_d(x,y) = \frac{H_d(x,y)}{1-y}$. We refer to $F_d(x,y)$ as a deformation of the Eulerian polynomials because of the following lemma.

\begin{lemma} \label{F_d(x,1) = A_d(x)}
	We have for $d\in\N$ that $F_d(x,1) = A_d(x)$.
\end{lemma}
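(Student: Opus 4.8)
The plan is to prove the identity $F_d(x,1) = A_d(x)$ by induction on $d$, but one immediately notices that the recursive definition of $F_d(x,y)$ has a factor of $1-y$ in the denominator, so one cannot simply substitute $y=1$. Instead I would work with the polynomial $H_d(x,y) = (1-xy^d) F_{d-1}(x,y) - y(1-x) F_{d-1}(xy,y)$, which vanishes at $y=1$ (as one checks directly: $H_d(x,1) = (1-x)F_{d-1}(x,1) - (1-x)F_{d-1}(x,1) = 0$), and recover $F_d(x,1)$ via a derivative computation, namely $F_d(x,1) = \lim_{y \to 1} \frac{H_d(x,y)}{1-y} = -\frac{\partial}{\partial y} H_d(x,y)\big|_{y=1}$, using L'Hôpital's rule or a first-order Taylor expansion of $H_d$ at $y=1$.

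The key steps, in order, are as follows. First, establish the base case $F_1(x,1) = 1 = A_1(x)$, which is immediate. Second, assume $F_{d-1}(x,1) = A_{d-1}(x)$ as the inductive hypothesis; note that this also gives $F_{d-1}(xy,y)\big|_{y=1} = A_{d-1}(x)$ and, after differentiating in $y$ and applying the chain rule, $\frac{\partial}{\partial y}\big[F_{d-1}(xy,y)\big]\big|_{y=1} = x A_{d-1}'(x) + \big(\partial_y F_{d-1}\big)(x,1)$. Third, compute $-\partial_y H_d(x,y)\big|_{y=1}$ by the product rule applied to each of the two terms in $H_d$. Differentiating $(1-xy^d)F_{d-1}(x,y)$ gives $-dx\, F_{d-1}(x,1) + (1-x)\big(\partial_y F_{d-1}\big)(x,1)$ at $y=1$; differentiating $y(1-x)F_{d-1}(xy,y)$ gives $(1-x)A_{d-1}(x) + (1-x)\big[x A_{d-1}'(x) + (\partial_y F_{d-1})(x,1)\big]$ at $y=1$. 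Fourth, subtract: the terms involving the unknown quantity $(\partial_y F_{d-1})(x,1)$ cancel, and one is left with
\[
F_d(x,1) = -dx\, A_{d-1}(x) + x(1-x) A_{d-1}'(x) + (1-x)A_{d-1}(x) - (1-x)A_{d-1}(x) + \text{(rearrangement)},
\]
which after collecting terms should simplify to $(1 + (d-1)x)A_{d-1}(x) + x(1-x)A_{d-1}'(x)$. Fifth, recognize this as precisely the right-hand side of the Eulerian recursion \eqref{Eulerian Polynomial Recursion}, so it equals $A_d(x)$, completing the induction.

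The main obstacle, and the reason the proof is not entirely routine, is the bookkeeping in the third and fourth steps: one must be careful that the unknown derivative term $(\partial_y F_{d-1})(x,1)$ genuinely cancels (it does, because it appears with coefficient $(1-x)$ in both the first term's derivative and inside the bracket coming from the chain rule on $F_{d-1}(xy,y)$, and these enter $H_d$ with opposite signs), and that the remaining algebra collapses exactly onto \eqref{Eulerian Polynomial Recursion} rather than merely something close to it. A secondary subtlety worth a sentence is justifying that $\lim_{y\to 1} H_d(x,y)/(1-y)$ legitimately equals $-\partial_y H_d(x,y)|_{y=1}$; since $H_d$ is a polynomial vanishing at $y=1$, this is just its first-order Taylor coefficient there, so no analytic care is needed beyond noting $H_d(x,1)=0$. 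Everything else is a finite symbolic computation.
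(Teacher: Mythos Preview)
Your proposal is correct and follows essentially the same approach as the paper: apply L'H\^opital to $H_d(x,y)/(1-y)$ at $y=1$, compute $-\partial_y H_d(x,1)$, observe that the $F_{d-1}^{(0,1)}(x,1)$ contributions cancel, and recognize the result as the Eulerian recursion \eqref{Eulerian Polynomial Recursion}. The only blemish is that your displayed formula in step four has the sign on $dx\,A_{d-1}(x)$ flipped and a stray self-cancelling pair $(1-x)A_{d-1}(x)-(1-x)A_{d-1}(x)$; once you actually carry out the subtraction carefully you get $F_d(x,1)=dx\,A_{d-1}(x)+(1-x)A_{d-1}(x)+x(1-x)A_{d-1}'(x)=(1+(d-1)x)A_{d-1}(x)+x(1-x)A_{d-1}'(x)$, exactly as you claim.
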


\begin{proof}
	For this proof and for later convenience, we observe that by simple differentiation rules that 
	\begin{multline} \label{H_d^{(0,1)}(x,y) equation}
		- H_d^{(0,1)}(x,y) = dxy^{d-1} F_{d-1}(x,y) - \left(1-xy^d\right) F_{d-1}^{(0,1)}(x,y) + (1-x)F_{d-1}(xy,y)\\
		+ xy(1-x) F_{d-1}^{(1,0)}(xy,y)
		+ y(1-x) F_{d-1}^{(0,1)}(xy,y).
	\end{multline}
	Now, $F_d(x,1) = \lim\limits_{y \to 1} \frac{H_d(x,y)}{1-y}$, and we can apply L'Hopitals rule to obtain
	\begin{align*}
		F_d(x,1) = - \lim\limits_{y \to 1} H_d^{(0,1)}(x,y) = \lp 1 + (d-1)x \rp F_{d-1}(x,1) + x\lp 1-x \rp F_{d-1}^{(1,0)}(x,1).
	\end{align*}
	Observing that this recurrence matches \eqref{Eulerian Polynomial Recursion} and that $F_1(x,1) = A_1(x) = 1$, the claim follows.
\end{proof}

We need a brief lemma which specifies that $F_d(x,y)$ does not have zeros of a certain type. This lemma follows by combining the fact that $F_d(x,y)$ is continuous with Lemma \ref{L:A_d Zeros} and Lemma \ref{F_d(x,1) = A_d(x)}.
\begin{lemma} \label{L:F_d zeros}
	For $d \in \mathbb{N}$, there exists a neighborhood $\mathcal{N}_{d}$ of $y = 1$ such that $F_d(x,y) \not = 0$ for all $x \in [0,1]$ and $y \in \mathcal{N}_{d}$.
\end{lemma}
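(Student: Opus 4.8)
The plan is to run the standard compactness argument indicated by the sentence preceding the statement. First I would note that, despite the denominator in $F_d(x,y) = H_d(x,y)/(1-y)$, the polynomial $F_d$ is genuinely continuous (indeed polynomial) on all of $\C^2$: evaluating the defining expression at $y=1$ gives $H_d(x,1) = (1-x)F_{d-1}(x,1) - (1-x)F_{d-1}(x,1) = 0$, so $1-y$ divides $H_d(x,y)$ and the apparent pole at $y=1$ is removable. In particular $F_d$ is continuous on $[0,1]\times\mathcal{N}$ for any neighborhood $\mathcal{N}$ of $1$.

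Next I would invoke Lemma \ref{F_d(x,1) = A_d(x)} to identify $F_d(x,1)=A_d(x)$ and Lemma \ref{L:A_d Zeros} to conclude $A_d(x)\neq 0$ for all $x\in[0,1]$. Then I argue by contradiction: if no neighborhood as in the statement existed, there would be sequences $y_m\to 1$ and $x_m\in[0,1]$ with $F_d(x_m,y_m)=0$. Compactness of $[0,1]$ lets me pass to a subsequence along which $x_m\to x_0\in[0,1]$, and continuity of $F_d$ then yields $A_d(x_0)=F_d(x_0,1)=\lim_m F_d(x_m,y_m)=0$, contradicting Lemma \ref{L:A_d Zeros}; hence some $\mathcal{N}_d$ works. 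Equivalently and quantitatively: $|A_d|$ attains a positive minimum $\delta$ on the compact set $[0,1]$, and uniform continuity of $F_d$ on $[0,1]\times\{|y-1|\le 1\}$ produces $\eta>0$ with $|F_d(x,y)-A_d(x)|<\delta$ whenever $|y-1|<\eta$, so the triangle inequality forces $F_d(x,y)\neq 0$ and one may take $\mathcal{N}_d=\{|y-1|<\eta\}$.

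There is essentially no obstacle here; the only point deserving a moment's care is the continuity of $F_d$ at $y=1$, which reduces to the identity $H_d(x,1)=0$ noted above and already implicit in the proof of Lemma \ref{F_d(x,1) = A_d(x)}. If later applications require $\mathcal{N}_d$ to be a neighborhood in $\C$ rather than in $\R$, the same argument applies verbatim, since $F_d$ is a polynomial and $[0,1]$ remains compact.
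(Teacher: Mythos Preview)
Your argument is correct and is precisely the approach the paper takes: it states that the lemma follows directly by combining the continuity of $F_d(x,y)$ with Lemma~\ref{L:A_d Zeros} and Lemma~\ref{F_d(x,1) = A_d(x)}. Your spelled-out compactness argument and the remark that $H_d(x,1)=0$ ensures continuity at $y=1$ simply make explicit what the paper leaves implicit.
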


We also need a certain differential equation satisfied by $F_d(x,y)$, which is centrally important for the evaluation of the asymptotic expansion of $r_d(n)$.

\begin{lemma} \label{F_d Differential Equation}
	We have for $d\in\N$ that
	\begin{align*}
		F_d^{(0,1)}(x,1) = \frac{dx}{2} F_d^{(1,0)}(x,1).
	\end{align*}
\end{lemma}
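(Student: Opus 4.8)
The plan is to induct on $d$, using the recursion $F_d(x,y) = \frac{H_d(x,y)}{1-y}$ together with the recursion for $H_d$ and the already-established facts $F_d(x,1) = A_d(x)$ (Lemma \ref{F_d(x,1) = A_d(x)}), $A_d(1) = d!$, and $A_d'(1) = \frac{(d-1)d!}{2}$. The base case $d = 1$ is trivial since $F_1 \equiv 1$, so both sides vanish. For the inductive step, the key observation is that $F_d^{(0,1)}(x,1)$ can be computed via a second application of L'H\^opital's rule: since $H_d(x,1) = 0$ and $F_d = \frac{H_d}{1-y}$, a Taylor expansion of $H_d(x,y)$ around $y = 1$ gives $F_d^{(0,1)}(x,1) = -\frac12 H_d^{(0,2)}(x,1)$ (and, as already used, $F_d(x,1) = -H_d^{(0,1)}(x,1)$, with $F_d^{(1,0)}(x,1) = -H_d^{(1,1)}(x,1)$ obtained by differentiating \eqref{H_d^{(0,1)}(x,y) equation} in $x$ and setting $y = 1$).

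First I would differentiate \eqref{H_d^{(0,1)}(x,y) equation} once more in $y$ to get an explicit formula for $H_d^{(0,2)}(x,y)$ in terms of the derivatives of $F_{d-1}$ evaluated at $(x,y)$ and at $(xy,y)$; then specialize to $y = 1$. This produces $F_d^{(0,1)}(x,1)$ as a linear combination of $F_{d-1}(x,1)$, $F_{d-1}^{(1,0)}(x,1)$, $F_{d-1}^{(0,1)}(x,1)$, $F_{d-1}^{(2,0)}(x,1)$, $F_{d-1}^{(1,1)}(x,1)$, and $F_{d-1}^{(0,2)}(x,1)$, with polynomial coefficients in $x$. The troublesome terms are the ones involving a $y$-derivative of $F_{d-1}$: into $F_{d-1}^{(0,1)}(x,1)$ I substitute the inductive hypothesis $F_{d-1}^{(0,1)}(x,1) = \frac{(d-1)x}{2} F_{d-1}^{(1,0)}(x,1)$, and into $F_{d-1}^{(0,2)}(x,1)$ and $F_{d-1}^{(1,1)}(x,1)$ I substitute what one gets by differentiating the inductive hypothesis in $x$ (namely $F_{d-1}^{(1,1)}(x,1) = \frac{d-1}{2}F_{d-1}^{(1,0)}(x,1) + \frac{(d-1)x}{2}F_{d-1}^{(2,0)}(x,1)$, and similarly an expression for $F_{d-1}^{(0,2)}(x,1)$ obtained from the analogous second-order L'H\^opital identity applied to $F_{d-1}$). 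After these substitutions, everything is expressed in terms of $F_{d-1}(x,1) = A_{d-1}(x)$ and its $x$-derivatives, and the goal reduces to a polynomial identity relating $F_d^{(0,1)}(x,1)$ and $F_d^{(1,0)}(x,1) = A_d'(x)$ that should follow from the Eulerian recursion \eqref{Eulerian Polynomial Recursion} and its derivatives.

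The main obstacle I anticipate is bookkeeping: keeping track of the chain-rule factors coming from the $F_{d-1}(xy,y)$ argument (each $y$-derivative there contributes both an explicit $y$-factor and an $x$-shift in one of the slots), and then verifying that after all substitutions the resulting polynomial identity in $x$ actually collapses to $F_d^{(0,1)}(x,1) = \frac{dx}{2}A_d'(x)$. To control this I would, rather than expanding $H_d^{(0,2)}$ blindly, first prove the cleaner auxiliary identity $H_d^{(0,1)}(x,y) + $ (correction term) satisfies a first-order relation, or alternatively verify the claim directly for $d = 2, 3$ using $F_2, F_3$ given explicitly, to make sure the general computation is on track. An attractive alternative that may avoid the heaviest algebra: show that the operator statement "$\partial_y|_{y=1} = \frac{dx}{2}\partial_x|_{y=1}$ on $F_d$" is equivalent, via $F_d = \frac{H_d}{1-y}$, to a homogeneity-type identity for $H_d$, and then verify that the $H_d$-recursion propagates exactly this identity — this reduces the problem to checking compatibility of one recursive step rather than manipulating six second-order derivatives, and the symmetry relation \eqref{Eulerian Polynomial Symmetry} together with $A_d'(1) = \frac{(d-1)d!}{2}$ strongly suggests such a clean homogeneity statement exists.
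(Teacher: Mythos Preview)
Your plan is exactly the paper's proof: induct on $d$, reduce via L'H\^opital to the identity $H_d^{(0,2)}(x,1) = dx\, H_d^{(1,1)}(x,1)$, compute each side by differentiating \eqref{H_d^{(0,1)}(x,y) equation} once in $x$ (respectively in $y$) and setting $y=1$, then substitute the inductive hypothesis together with its $x$-derivative $F_{d-1}^{(1,1)}(x,1) = \tfrac{d-1}{2}F_{d-1}^{(1,0)}(x,1) + \tfrac{(d-1)x}{2}F_{d-1}^{(2,0)}(x,1)$. The only place you overestimate the work is the $F_{d-1}^{(0,2)}(x,1)$ term: when you differentiate \eqref{H_d^{(0,1)}(x,y) equation} in $y$ and set $y=1$, the two $F_{d-1}^{(0,2)}$ contributions (from the $-(1-xy^d)F_{d-1}^{(0,1)}(x,y)$ and the $y(1-x)F_{d-1}^{(0,1)}(xy,y)$ terms) cancel, so no expression for $F_{d-1}^{(0,2)}(x,1)$ is ever needed and the two sides then match directly without any appeal to \eqref{Eulerian Polynomial Recursion}.
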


\begin{proof}
	We prove this claim by induction on $d$. The identity is clear for $d=1$. We next assume that for fixed $d\ge2$, the claim holds. We next reduce the claim to an expression in terms of $H_d(x,y)$. By using L'Hopital's rule, it is not hard to see that
	\begin{align*}
		F_d^{(1,0)}(x,1) &= \lim_{y \to 1} \frac{H_d^{(1,0)}(x,y)}{1-y} = - H_d^{(1,1)}(x,1),\\
		F_d^{(0,1)}(x,1) &= \lim_{y \to 1} \frac{(1-y) H_d^{(0,1)}(x,y) + H_d(x,y)}{\lp 1-y \rp^2} = 
		%\frac{1}{2} \lim_{y \to 1} \left[ \frac{\partial}{\partial y} \lp \lp 1-y \rp H_d^{(0,2)}(x,y) \rp \right] =
		- \frac{1}{2} H_d^{(0,2)}(x,1).
	\end{align*}
	Therefore, in order to prove the lemma we only need to prove that
	\begin{align} \label{H-differential equation}
		- H_d^{(0,2)}(x,1) = - dx H_d^{(1,1)}(x,1).
	\end{align}
	We use \eqref{H_d^{(0,1)}(x,y) equation} as a stepping stone for proving \eqref{H-differential equation}. By taking the derivative of \eqref{H_d^{(0,1)}(x,y) equation} with respect to $x$ and evaluating subsequently at $y=1$, it is not hard to see that
	\begin{multline*}
		\hspace{-0.4cm}- dx H_d^{(1,1)}(x,1)\\ = d(d-1)x F_{d-1}(x,1) + dx\lp 2 + \lp d-3 \rp x \rp F_{d-1}^{(1,0)}(x,1) + dx^2\lp 1-x \rp F_{d-1}^{(2,0)}(x,1).
	\end{multline*}
	Similarly, by taking the derivative of \eqref{H_d^{(0,1)}(x,y) equation} with respect to $y$ and substituting $y=1$, we obtain
	\begin{multline} \label{Temp H2 calculation}
		- H_d^{(0,2)}(x,1) = d(d-1)x F_{d-1}(x,1) + 2\lp \lp d-1 \rp x + 1 \rp F_{d-1}^{(0,1)}(x,1)\\
		+ 2x\lp 1-x \rp F_{d-1}^{(1,0)}(x,1)
		+ 2x\lp 1-x \rp F_{d-1}^{(1,1)}(x,1) + x^2\lp 1-x \rp F_{d-1}^{(2,0)}(x,1).
	\end{multline}
	Now, using the induction hypothesis, we can show that
	\begin{align*}
		F_{d-1}^{(1,1)}(x,1) &= \frac{\partial}{\partial x} F_{d-1}^{(0,1)}(x,1) = \frac{\partial}{\partial x}  \frac{(d-1)x}{2} F_{d-1}^{(1,0)}(x,1)\\
		&= \frac{d-1}{2} F_{d-1}^{(1,0)}(x,1) + \frac{(d-1)x}{2} F_{d-1}^{(2,0)}(x,1).
	\end{align*}
	Substituting this into \eqref{Temp H2 calculation} and comparing the formula to that for $-dx H_d^{(1,1)}(x,1)$, we obtain
	%	\begin{align*}
		%		- H_d^{(0,2)}(x,1) &= d(d-1)x F_{d-1}(x,1) + 2\lp \lp d-1 \rp x + 1 \rp \left[ \frac{(d-1)x}{2} F_{d-1}^{(1,0)}(x,1) \right] + 2x\lp 1-x \rp F_{d-1}^{(1,0)}(x,1) \\ &+ 2x\lp 1-x \rp \left[ \frac{d-1}{2} F_{d-1}^{(1,0)}(x,1) + \frac{(d-1)x}{2} F_{d-1}^{(2,0)}(x,1) \right] + x^2\lp 1-x \rp F_{d-1}^{(2,0)}(x,1) \\ &= d(d-1)x F_{d-1}(x,1) + dx\lp \lp d-3 \rp x + 2 \rp F_{d-1}^{(1,0)}(x,1) + dx^2\lp 1-x \rp F_{d-1}^{(2,0)}(x,1) \\ &= - dx H_d^{(1,1)}(x,1).
		%	\end{align*}
	\eqref{H-differential equation} and therefore the lemma is proven.
\end{proof}

\subsection{Generating functions for $s_d(n)$ and $r_d(n)$}

Here we recall the generating functions for $s_d(n)$ and $r_d(n)$, each of which were proven in \cite{DJSW}. Firstly, for $s_d(n)$ Theorem 1.2 of \cite{DJSW} gives that
\begin{align}\label{eqn: gen function s_d}
	\sum_{n\geq0} s_d(n) q^n = \prod_{n\ge1} \frac{A_d\left(q^n\right)}{\left(1-q^n\right)^{d+1}}.
\end{align}
We also require the generating function for $r_d(n)$, proven in Theorem 1.1 of \cite{DJSW},
\begin{align}\label{eqn: gen function r_d}
	\sum_{n\geq0} r_d(n) q^n = \prod_{n\ge1} \frac{F_d\left( q^{(d+1)(n-1)+1} , q \right)}{1-q^n}.
\end{align}

\subsection{Evaluating integrals}

In the process of evaluating the constants in our main theorems, the evaluation of certain integrals is paramount. In order  to evaluate these integrals, we need the {\it dilogarithm function} defined for $|z| \leq 1$ by
\begin{equation*} %\label{Dilogarithm function definition}
	\mathrm{Li}_2\lp z \rp := \sum_{n \geq 1} \frac{z^n}{n^2}
\end{equation*}
and on $\C \setminus [1,\infty)$ by the analytic continuation (see \cite[page 5]{Z})
	 \begin{align*}
	 		\mathrm{Li}_2\lp z \rp \coloneqq - \int_{0}^{z} \log(1-u)\frac{du}{u}.
	 \end{align*}
	 For many interesting properties of this function, see \cite{Z}. We now prove a proposition to evaluate certain integrals.

\begin{prop} \label{Main Term Integrals}
	Let $P(x) = \prod_{j=1}^d \lp x - \alpha_j \rp \in \R[x]$ be a monic polynomial of degree $d\in\N$ such that $P(0) = 1$ and such that $P(x)$ has no zeros on the interval $[0,1]$. Define the integrals
	\begin{align*}
		\mathcal I_P := \int_0^\infty \Log\lp P(e^{-x}) \rp dx.
	\end{align*}
	Then we have
	\begin{align*}
		\mathcal I_P = - \sum_{j=1}^d \mathrm{Li}_2\lp \frac{1}{\alpha_j} \rp.
	\end{align*}
\end{prop}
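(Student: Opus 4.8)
The plan is to reduce $\mathcal I_P$ to a sum of elementary integrals by factoring the polynomial, and then evaluate each piece by expanding the logarithm into its power series and integrating term by term. Since $P(x) = \prod_{j=1}^d (x - \alpha_j)$ is monic with $P(0) = 1$, we have $\prod_j (-\alpha_j) = 1$, so we may write $P(x) = \prod_{j=1}^d \left(1 - \frac{x}{\alpha_j}\right)$. Taking $x = e^{-t}$ and using the hypothesis that $P$ has no zeros on $[0,1]$ (so that $\operatorname{Log}$ is well-defined and we may split the logarithm of the product into a sum), we obtain
\begin{equation*}
	\mathcal I_P = \sum_{j=1}^d \int_0^\infty \operatorname{Log}\left(1 - \frac{e^{-t}}{\alpha_j}\right) dt.
\end{equation*}
Thus it suffices to show that for each root $\alpha$, $\int_0^\infty \operatorname{Log}\left(1 - \frac{e^{-t}}{\alpha}\right) dt = -\mathrm{Li}_2\!\left(\frac{1}{\alpha}\right)$.

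For the individual integral I would proceed as follows. Write $z = 1/\alpha$; the no-zeros hypothesis on $[0,1]$ guarantees that $1 - z e^{-t} \neq 0$ for all $t \ge 0$, so the integrand is continuous, and in fact $\operatorname{Log}(1 - ze^{-t}) = O(e^{-t})$ as $t \to \infty$, giving absolute convergence. If $|z| \le 1$ then one can directly substitute the Taylor expansion $\operatorname{Log}(1 - ze^{-t}) = -\sum_{n \ge 1} \frac{z^n e^{-nt}}{n}$, interchange sum and integral (justified by uniform convergence on $[\delta, \infty)$ together with integrability near $0$, or by dominated convergence), and integrate to get $-\sum_{n \ge 1} \frac{z^n}{n^2} = -\mathrm{Li}_2(z)$. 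For the remaining case where $|z| > 1$ (but $z \notin [1,\infty)$, which is forced by the no-zeros condition), I would instead use the substitution $u = z e^{-t}$, so $du = -u\, dt$ and $dt = -du/u$, with $u$ running from $z$ down to $0$ as $t$ runs from $0$ to $\infty$; this converts the integral into $\int_0^z \operatorname{Log}(1-u)\frac{du}{u}$, which is exactly $-\mathrm{Li}_2(z)$ by the analytic-continuation definition of the dilogarithm quoted in the paper. This second argument actually handles all cases at once, so it is cleanest to use the substitution uniformly.

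The main technical point to be careful about is the branch of the logarithm: one must check that the path $u = ze^{-t}$, $t \in [0,\infty)$, i.e. the ray segment from $z$ to $0$, stays inside $\C \setminus [1,\infty)$ so that the contour integral defining $\mathrm{Li}_2$ via analytic continuation agrees with $\int_0^\infty \operatorname{Log}(1 - ze^{-t})\,dt$ under the change of variables, and that $\operatorname{Log}$ here means the principal branch consistent with that continuation. Since $\alpha$ is a root of $P$ and $P$ has no root in $[0,1]$, we know $\alpha \notin [0,1]$, hence $z = 1/\alpha \notin [1, \infty)$; moreover the whole segment $\{sz : 0 < s \le 1\}$ avoids $[1,\infty)$ unless $z$ itself is a positive real exceeding $1$, which is excluded. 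For non-real roots $\alpha_j$ they come in conjugate pairs since $P \in \R[x]$, so the sum $-\sum_j \mathrm{Li}_2(1/\alpha_j)$ is manifestly real, consistent with $\mathcal I_P$ being a real integral; I would remark on this but it is not needed for the proof. Once the branch issue is settled, summing over $j$ yields the claimed formula.
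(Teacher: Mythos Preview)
Your argument is correct and reaches the same conclusion, but it takes a genuinely different route from the paper's proof. The paper proceeds by first integrating by parts to replace $\Log(P(e^{-x}))$ with $x\,\frac{P'(e^{-x})e^{-x}}{P(e^{-x})}$, then substituting $u=e^{-x}$ and using the partial-fraction decomposition $\tfrac{P'(u)}{P(u)}=\sum_j\tfrac{1}{u-\alpha_j}$; each resulting integral $\int_0^1\tfrac{\log u}{u-\alpha_j}\,du$ is then identified with $\mathrm{Li}_2(1/\alpha_j)$ via an explicit antiderivative checked for $a>1$ and extended by analytic continuation. You instead exploit $P(0)=1$ to factor $P(x)=\prod_j(1-x/\alpha_j)$ up front, split the logarithm, and then a single substitution $u=z e^{-t}$ lands directly on the integral definition of $\mathrm{Li}_2$. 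Your route is shorter and avoids the antiderivative verification; the paper's route, by contrast, sidesteps any branch question in splitting $\Log$ of a product of possibly complex factors. On that last point your justification (``$P$ has no zeros on $[0,1]$, so we may split'') is slightly glib: one should note that each factor $1-e^{-t}/\alpha_j$ avoids $(-\infty,0]$ for all $t\ge 0$, so every $\Log(1-e^{-t}/\alpha_j)$ is continuous in $t$ and tends to $0$ as $t\to\infty$, whence the difference between $\Log\prod$ and $\sum\Log$ is a continuous $2\pi i\Z$-valued function vanishing at infinity and hence identically zero. With that remark added, your proof is complete.
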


\begin{proof}
	Observe firstly that the assumptions that $P$ is monic, that $P(0) = 1$ and that $P(x)$ has no zeros in the interval $[0,1]$ imply that $\mathcal I_P$ converges. Using integration by parts, we obtain
	\begin{align*}
		\mathcal I_P = \int_0^\infty \frac{x e^{-x} P^\prime(e^{-x})}{P(e^{-x})} dx.
	\end{align*}
	By further substituting $u = e^{-x}$, we obtain
	\begin{align*}
		\mathcal I_P = - \int_0^1 \log(u) \frac{P^\prime(u)}{P(u)} du.
	\end{align*}
	Since $P$ is a monic polynomial, we have
	\begin{align*}
		\frac{P^\prime(u)}{P(u)} = \sum_{j=1}^d \frac{1}{u-\alpha_j},
	\end{align*}
	and therefore
	\begin{align*}
		\mathcal I_P = - \sum_{j=1}^d \int_0^1 \frac{\log(u)}{u-\alpha_j} du.
	\end{align*}
	We now consider for $a \not \in [0,1]$ the integrals
	\begin{align*}
		I(a) := \int_0^1 \frac{\log(u)}{u-a} du.
	\end{align*}
	We claim that $I(a) = \mathrm{Li}_2(\frac1a)$. Because $\mathrm{Li}_2(z)$ is analytic in $\C\setminus[1,\infty)$ (see \cite{Z}), both sides of this formula are analytic functions of $a$ away from $[0,1]$, and therefore to prove our claim we only need to prove its truth in the region $a>1$. Here, the identity $\frac{d}{du} \mathrm{Li}_2\lp u \rp = - \frac{1}{u} \log\lp 1-u \rp$ is valid for $|u|<1$ because of the series expansion of $\mathrm{Li}_2(u)$, and so it is straightforward to show that
	\begin{align*}
		\dfrac{d}{du} \left( \mathrm{Li}_2\lp \dfrac{u}{a} \rp + \log(u) \log\lp 1 - \dfrac{u}{a} \rp \right) = \dfrac{\log(u)}{u-a}
	\end{align*}
	for $a>1$. Since $\mathrm{Li}_2(0) = 0$ and $\log(u)\log(1-\frac ua) \to 0$ as $u \to 0^+$, we therefore obtain for $a > 1$ that
	\begin{align*}
		\int_0^1 \dfrac{\log(u)}{u-a} du = \mathrm{Li}_2\lp \dfrac{1}{a} \rp,
	\end{align*}
	and by analytic continuation the identity holds for $a \in \C \backslash [0,1]$. Since the polynomial $P$ has no zeros in the interval $[0,1]$, the claim follows.
\end{proof}

\section{Proof of Theorem \ref{Thm: s_d}} \label{Proof of Theorem 1}

In this section we prove Theorem \ref{Thm: s_d}. Recall the generating function for $s_d(n)$ in \eqref{eqn: gen function s_d}. To ease notation, we define
\begin{equation*}
	F_d(q) \coloneqq \prod_{n\ge1} A_d\left(q^n\right).
\end{equation*}
We begin with a preparatory lemma on the asymptotic of $F_d(q)$.

\begin{lemma}\label{L:BigF}
	As $w \to 0$ in $D_\theta$, we have
	\begin{equation*}
		F_d\left(e^{-w}\right) = \frac{e^{\frac{C_{d}}{w}+\frac{d-1}{24}w}}{\sqrt{d!}}\left(1+O\left(w^{N}\right)\right)
	\end{equation*}
	for any $N\in\N$.
\end{lemma}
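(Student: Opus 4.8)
The plan is to take the logarithm of $F_d(e^{-w}) = \prod_{n\ge1} A_d(e^{-nw})$ and apply the shifted Euler--Maclaurin summation of Proposition \ref{Theorem:EulerMaclaurin1DShifted} to the resulting sum $\sum_{n\ge1} \Log(A_d(e^{-nw}))$. First I would set $f(x) := \Log(A_d(e^{-x}))$; since $A_d(1) = d!$ by \eqref{eqn: A_d(1)} and since $A_d$ has no zeros on $[0,1]$ by Lemma \ref{L:A_d Zeros}, the function $f$ is holomorphic in a neighborhood of $D_\theta$ (for $\theta$ small enough that $e^{-x}$ stays in a region where $A_d$ is zero-free), and $f(x) \to \Log(1) = 0$ exponentially fast as $\re{x}\to\infty$, so $f$ and all its derivatives are of sufficient decay. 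Writing the product index as $n = m+1$ with $m\ge0$, we get $\log F_d(e^{-w}) = \sum_{m\ge0} f((m+1)w)$, which is exactly the shape required by Proposition \ref{Theorem:EulerMaclaurin1DShifted} with $a=1$.

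Applying the proposition with $a = 1$ gives
\begin{equation*}
	\log F_d\left(e^{-w}\right) = \frac{I_f}{w} - \sum_{n=0}^{N-1} \frac{B_{n+1}(1) f^{(n)}(0)}{(n+1)!} w^n + O_N\left(w^N\right).
\end{equation*}
Here $I_f = \int_0^\infty \Log(A_d(e^{-x}))\,dx = C_d$ by the definition of $C_d$, which produces the $\frac{C_d}{w}$ term. The constant term ($n=0$) is $-B_1(1) f(0) = -\frac12 \cdot 0 = 0$ since $f(0) = \Log(A_d(1)) = \Log(d!)$ — wait, this needs care: $f(0) = \log(d!) \ne 0$, so the $n=0$ term is $-B_1(1)\log(d!) = -\frac12\log(d!)$, giving the factor $e^{-\frac12\log(d!)} = \frac{1}{\sqrt{d!}}$. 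The $n=1$ term is $-\frac{B_2(1) f'(0)}{2} w$; one computes $B_2(1) = \frac16$ and $f'(0) = \frac{d}{dx}\big|_{x=0}\Log(A_d(e^{-x})) = -\frac{A_d'(1)}{A_d(1)} = -\frac{(d-1)\cdot d!/2}{d!} = -\frac{d-1}{2}$ using \eqref{Eulerian Polynomial Special Values} and \eqref{eqn: A_d(1)}, so this term equals $\frac{d-1}{24} w$, matching the claimed exponent. Exponentiating and absorbing the higher-order terms $\sum_{n\ge2}(\cdots)w^n + O_N(w^N)$ into a factor $1 + O(w^N)$ (valid after adjusting $N$, since all these are $O(w^2)$ and in particular $O(w^N)$ for the relevant range, and $e^{O(w^N)} = 1 + O(w^N)$) yields the stated formula.

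The main things to check carefully, rather than any deep obstacle, are: (i) the holomorphy and sufficient-decay hypotheses of Proposition \ref{Theorem:EulerMaclaurin1DShifted} — one must verify that for sufficiently small $\theta$ the path $\{e^{-x} : x \in D_\theta\}$ avoids the zeros of $A_d$, which follows from Lemma \ref{L:A_d Zeros} together with the fact that $|e^{-x}| \le 1$ on $D_\theta$ and $e^{-x} \to 0$, plus a compactness argument near $x = 0$; (ii) the bookkeeping of the Bernoulli-polynomial values $B_1(1) = \tfrac12$ and $B_2(1) = \tfrac16$ and the derivative computation for $f'(0)$; and (iii) the claim that the tail of the Euler--Maclaurin expansion can be rewritten as $1 + O(w^N)$ after exponentiation, which is routine since each omitted term is $O(w^2)$. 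I do not expect a genuine difficulty here — the lemma is essentially a direct application of the quoted Euler--Maclaurin result combined with the earlier evaluations of $A_d(1)$ and $A_d'(1)$ and the definition of $C_d$.
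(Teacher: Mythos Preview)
Your argument is correct up through the computation of the $n=0$ and $n=1$ terms, and these match the paper's computation exactly. The gap is in step (iii). After applying Proposition~\ref{Theorem:EulerMaclaurin1DShifted} you have
\[
\log F_d\left(e^{-w}\right) = \frac{C_d}{w} - \tfrac12\log(d!) + \tfrac{d-1}{24}\,w - \sum_{n=2}^{N-1} \frac{B_{n+1}(1) f^{(n)}(0)}{(n+1)!}\,w^n + O_N\!\left(w^N\right).
\]
You then assert that the sum over $2\le n\le N-1$ can be absorbed into $1+O(w^N)$ after exponentiation. That is false in general: this sum is $O(w^2)$, not $O(w^N)$, so exponentiating would only yield $1+O(w^2)$. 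The statement of the lemma claims $1+O(w^N)$ for \emph{every} $N$, which forces \emph{all} of these intermediate coefficients to vanish, and this must actually be proved.

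The paper supplies the missing step via the symmetry relation \eqref{Eulerian Polynomial Symmetry}, $A_d(x)=x^{d-1}A_d(1/x)$. This implies that
\[
f_d^\ast(z):=f(z)+\tfrac{d-1}{2}z=\Log\!\left(A_d\!\left(e^{-z}\right)\right)+\tfrac{d-1}{2}z
\]
is an even function of $z$. Hence $f^{(n)}(0)=0$ for every odd $n\ge 3$, while for even $n\ge 2$ one has $B_{n+1}(1)=B_{n+1}=0$. Thus every term with $n\ge 2$ in the Euler--Maclaurin sum genuinely vanishes, and only then does exponentiation give $1+O(w^N)$ for all $N$. Without this parity argument your proof establishes the weaker (and for the application still sufficient, but not what the lemma states) estimate with error $1+O(w^2)$.
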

\begin{proof}
	Let
	\begin{equation*}
		\CF_{d}(q):= \Log(F_d(q)) = \sum_{n\ge1} \Log\left(A_d\left(q^n\right)\right),
	\end{equation*}
	where throughout we use the principal branch of the logarithm. Then
	\begin{equation*}
		\CF_d\left(e^{-w}\right) = \sum_{n\ge1} f_d(nw),
	\end{equation*}
	where
	\begin{equation*}
		f_{d}(z) := \Log\left(A_d\left(e^{-z}\right)\right).
	\end{equation*}
	Note that by \eqref{eqn: A_d(1)} we have $A_d(1) = d! >0$, that by \eqref{Eulerian Polynomial Definition} we have $A_d(0) = 1$, and that $A_d(e^{-w})$ is holomorphic in $w$. Therefore, in the limit $w \to 0$ (i.e., for $|w|$ suitably small) we have that $A_d(e^{-nw})$ is arbitrarily close to $d!$, and avoids the branch of the complex logarithm on the cut $(-\infty,0]$.
	
	Recall that $A_d(x)$ has no roots on the interval $[0,1]$ by Lemma \ref{L:A_d Zeros}. Applying Proposition \ref{Theorem:EulerMaclaurin1DShifted} gives that for $w\to 0$ in $D_\theta$ we have that
	\begin{equation}\label{eqn: CF asymp}
		\CF_d\left(e^{-w}\right) \sim \frac{I_{f_d}}w - \sum_{n\ge0} \frac{B_{n+1}(1)}{(n+1)!} f_d^{(n)}(0) w^n.
	\end{equation}
	We adopt the usual convention that $f(z) \sim \sum_{n \geq -1} a_n z^n$ means that for each $N \geq -1$, we have $f(z) = \sum_{n = -1}^N $ $ a_n z^n + O(z^{N+1})$.
	
	To determine the term $n=0$, we compute, using \eqref{eqn: A_d(1)},
	\begin{equation*}
		f_d(0) = \log(A_d(1)) = \log(d!).
	\end{equation*}
%	Using the recurrence which defines $A_d$ given in \eqref{Eulerian Polynomial Recursion} we see that
%	\begin{equation*}
%		A_d(1) = (1+(d-1)) A_{d-1}(1) = dA_{d-1}(1) = d(d-1) \cdots 1 A_0(1) = d!.
%	\end{equation*}
	Thus the term $n=0$ in \eqref{eqn: CF asymp} equals $-\frac{\log(d!)}2$.
	We next determine the term $n=1$. By definition
	\begin{equation*}
		f_d'(0) = \left[\frac\del{\del z} \log\left(A_d\left(e^{-z}\right)\right)\right]_{z=0} = -\frac{A_d'(1)}{A_d(1)}.
	\end{equation*}
	Using \eqref{eqn: A_d(1)} and \eqref{Eulerian Polynomial Special Values}, we obtain $\frac{d-1}{24}$ for the term $n=1$.
%	Differentiating the recursion \eqref{Eulerian Polynomial Recursion} (as a function of $q$), using the fact that $A_d(1) = d!$, and setting $q=1$, we have
%	\begin{equation*}
%		A_d^\prime(1) = (d-1)\lp A_{d-1}^\prime(1) + A_{d-1}(1) \rp = (d-1) A_{d-1}^\prime(1) + (d-1) (d-1)!.
%	\end{equation*}
%	We therefore obtain\footnote{We note that $A_d^\prime(1)$ are known as the {Lah numbers} (OEIS A001286).}
%	\begin{equation*}
%		f_d^\prime(0) \dfrac{B_2}{2} (-1) = \dfrac{d-1}{24}.
%	\end{equation*}
	Plugging into \eqref{eqn: CF asymp} we therefore obtain
	\begin{align*}
		\mathcal F_d\lp e^{-w} \rp \sim \dfrac{I_{f_d}}{w} - \dfrac{\log(d!)}{2} + \dfrac{d-1}{24}w + O\left(w^{2}\right).
	\end{align*}

	We are left to show that the asymptotic expansion has no further terms than the three given on the right-hand side. Using \eqref{Eulerian Polynomial Symmetry}, it is not difficult to show that
	\begin{equation*}
		f_{d}^{\ast}(z) := f_{d}(z) + \frac{d-1}{2}z = \log\left(A_{d}\left(e^{-z}\right)\right) +\frac{d-1}{2}z
	\end{equation*}
	is an even function. Therefore, in \eqref{eqn: CF asymp} only the term $n=1$ and $n$ even terms survive. However, for $n \geq 2$ even it is well-known that $B_{n+1}(1)=0$, and thus only the terms $n=0$ and $n=1$ in the sum of \eqref{eqn: CF asymp} contribute to the asymptotic. Combining these observations gives the claim.
\end{proof}

We are now in a position to prove Theorem \ref{Thm: s_d}.
\begin{proof}[Proof of Theorem \ref{Thm: s_d}]
	Recall the generating function in \eqref{eqn: gen function s_d}. Using \eqref{E:partas} and Lemma \ref{L:BigF}, we have as $w \to 0$ in $D_\theta$ that
	\begin{equation*}
		\sum_{n\ge0} s_d(n) e^{-nw} \sim \left(\sqrt{\frac w{2\pi}} e^{\frac{\pi^2}{6w}}\right)^{d+1} \frac{e^{\frac{C_d}w+\frac{d-1}{24}w}}{\sqrt{d!}} \sim \frac1{(2\pi)^\frac{d+1}2\sqrt{d!}} w^{\frac{d+1}2} e^{\left(C_d+\frac{\pi^2(d+1)}6\right)\frac1w}.
	\end{equation*}
	Plugging into Proposition \ref{P:CorToIngham}, with $\l=\frac1{(2\pi)^\frac{d+1}2\sqrt{d!}}$, $\b=\frac{d+1}2$, and $\g=C_d+\frac{\pi^2(d+1)}6$ then gives the claimed asymptotic for $s_d(n)$.
\end{proof}

Using similar techniques, it is not hard to prove the following theorem for a general class of polynomials. Note that in general we do not obtain a terminating asymptotic expansion.

\begin{theorem}\label{Thm: general polynomials}
	 Let $P(x) \in \R[x]$ be a monic polynomial with $P(0)=1$, $P(1)>0$, and assume that $P$ has no zeros in $[0,1]$. Let $H(q) \coloneqq \prod_{n\ge1} P(q^n)$. Denote the Fourier coefficients of $H(q)$ by $c(n)$. Suppose that $c(n)$ are non-negative and weakly increasing for $n \gg 0$. Define
	\begin{align*}
		\mathcal{C}_P \coloneqq \int_0^\infty \log\lp P(e^{-x}) \rp dx.
	\end{align*}
	Then as $n\to \infty$ we have that
	\begin{align*}
		c(n) \sim \frac{\mathcal{C}_P^{\frac14}}{2\sqrt{\pi P(1)} n^{\frac34}} e^{2\sqrt{\mathcal{C}_P n}}.
	\end{align*}
\end{theorem}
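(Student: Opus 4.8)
The plan is to mimic the proof of Theorem~\ref{Thm: s_d}, replacing the specific product $\prod_{n\ge1} A_d(q^n)/(1-q^n)^{d+1}$ by the general product $H(q)=\prod_{n\ge1} P(q^n)$, and then invoke Ingham's Tauberian theorem (Proposition~\ref{P:CorToIngham}). The first step is to establish the asymptotic behavior of $H(e^{-w})$ as $w\to 0$ in a suitable cone $D_\theta$. Writing $\log H(e^{-w}) = \sum_{n\ge1} g(nw)$ with $g(z) := \Log(P(e^{-z}))$, I want to apply the shifted Euler--Maclaurin formula (Proposition~\ref{Theorem:EulerMaclaurin1DShifted}) with $a=1$. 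For this I must check that $g$ is holomorphic in a neighborhood of $D_\theta$ and that $g$ and all its derivatives are of sufficient decay; the holomorphy near the origin follows because $P(0)=1\ne 0$ and $P$ has no zeros on $[0,1]$ (so $\log$ stays on its principal branch for $|z|$ small), and the decay as $|z|\to\infty$ holds because $P(e^{-z})\to 1$ exponentially. This yields
\begin{equation*}
	\log H\!\left(e^{-w}\right) \sim \frac{\mathcal{C}_P}{w} - \sum_{n\ge 0} \frac{B_{n+1}(1)}{(n+1)!} g^{(n)}(0)\, w^n
\end{equation*}
as $w\to 0$ in $D_\theta$, where $I_g = \mathcal{C}_P$ by definition.

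Next I would compute the two lowest-order terms in the sum: the $n=0$ term uses $B_1(1)=\tfrac12$ and $g(0)=\log P(1)$, contributing $-\tfrac12\log P(1)$; the $n=1$ term uses $B_2(1)=\tfrac16$ and $g'(0) = -P'(1)/P(1)$, contributing a constant multiple of $P'(1)/P(1)$ times $w$. Hence
\begin{equation*}
	H\!\left(e^{-w}\right) \sim \frac{1}{\sqrt{P(1)}}\, e^{\frac{\mathcal{C}_P}{w}}\bigl(1 + O(w)\bigr).
\end{equation*}
Crucially, for the Ingham input I only need the leading exponential and the power of $w$ correctly, namely $H(e^{-w}) \sim P(1)^{-1/2} e^{\mathcal{C}_P/w}$; the $O(w)$ error and higher terms are harmless (unlike in Lemma~\ref{L:BigF}, there is no symmetry forcing the expansion to terminate, which is why the theorem only claims the main term). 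I then need the second, minor-arc-type bound in \eqref{E:as1}: for $z=x+iy$ with $|y|\le \Delta x$ one has $H(e^{-z}) \ll e^{\mathcal{C}_P/|z|}$ (with the trivial power of $|z|$, i.e. $\beta=0$ here). This follows from the same Euler--Maclaurin analysis carried out in the cone $D_\theta$ with $\theta = \arctan\Delta$, taking real parts: $\re{1/z} \le 1/|z|$ times a bounded factor, and the convergence of $\mathcal{C}_P$ guarantees $\re{I_g/z}$ is controlled; more directly, $|\log H(e^{-z})| \le \sum_{n\ge1}|g(nz)|$ and the decay of $g$ plus comparison with $\int_0^\infty |g(tz)|\,|dt|$ gives the $O(1/|z|)$ bound on the real part.

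With both conditions in \eqref{E:as1} verified, I apply Proposition~\ref{P:CorToIngham} with $\lambda = P(1)^{-1/2}$, $\beta = 0$, and $\gamma = \mathcal{C}_P$ (which is positive: since $P>0$ on $[0,1]$, continuity and $P(1)>0$, $P(0)=1$ force the sign of $\log P(e^{-x})$ to give $\mathcal{C}_P>0$ — this should be argued via Proposition~\ref{Main Term Integrals} applied to a shifted/scaled version of $P$, or by noting $\mathcal{C}_P = -\sum_j \mathrm{Li}_2(1/\alpha_j)$ with all $\alpha_j\notin[0,1]$). The hypothesis that $c(n)$ is non-negative and weakly increasing for $n\gg 0$ is assumed, so Ingham applies (after discarding finitely many initial coefficients, which does not affect the asymptotic), yielding
\begin{equation*}
	c(n) \sim \frac{\mathcal{C}_P^{1/4}}{2\sqrt{\pi P(1)}\, n^{3/4}}\, e^{2\sqrt{\mathcal{C}_P n}}.
\end{equation*}
The main obstacle is purely analytic bookkeeping: verifying the sufficient-decay hypotheses for $g$ and all its derivatives uniformly in the cone, and proving the second (minor-arc) estimate in \eqref{E:as1} with the correct power of $|z|$; the positivity $\mathcal{C}_P>0$ is a secondary point that is cleanly handled by Proposition~\ref{Main Term Integrals}.
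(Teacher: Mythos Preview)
Your proposal is correct and follows exactly the approach the paper intends: the paper does not give a separate proof but simply states ``Using similar techniques, it is not hard to prove the following theorem\ldots\ Note that in general we do not obtain a terminating asymptotic expansion,'' referring back to the argument for Theorem~\ref{Thm: s_d} via Lemma~\ref{L:BigF} and Proposition~\ref{P:CorToIngham}, which is precisely what you carry out (including your observation that the symmetry-based termination of Lemma~\ref{L:BigF} is unavailable here). One minor correction: your attempted justification that $\mathcal{C}_P>0$ from the polynomial hypotheses alone is wrong (for instance $P(x)=x^2-x+1$ satisfies $P(0)=1$, $P(1)>0$, is monic, and has no zeros in $[0,1]$, yet $\log P(e^{-x})<0$ for all $x>0$, so $\mathcal{C}_P<0$); the positivity of $\mathcal{C}_P$ needed for Proposition~\ref{P:CorToIngham} is really implicit in the assumed growth of $c(n)$, and the paper leaves this point unaddressed as well.
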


\begin{remarks*}\hspace{0cm}
	\begin{enumerate}[wide,labelindent=0pt,labelwidth=!,label=\rm(\arabic*)]
		\item One may use Proposition \ref{Main Term Integrals} to obtain that 
		\begin{align*}
		\mathcal{C}_P = - \sum_{j=1}^d \mathrm{Li}_2\lp \frac{1}{\alpha_j} \rp,
		\end{align*}
		where the sum runs over all roots $\alpha_j$ of $P$ counted with multiplicity.
		\item The results can be extended immediately to products of rational functions, provided the numerator and denominator satisfy the hypotheses. This is done in more generality in Theorem \ref{Thm: General Theorem}. One could also avoid the need for monotonicity of the coefficients if stronger asymptotic properties away from $q \to 1$ are derived.
		\item For certain choices of polynomial $P$, it is not hard to see that the sum of dilogarithms defining $\mathcal{C}_P$ simplifies considerably. For example, let $\ell$ be a fixed prime. If $P$ is chosen to be the $\ell$-th cyclotomic polynomial $\Phi_\ell$, the roots are precisely all of the primitive $\ell$-th roots of unity. Then using the distribution property for dilogarithms (see e.g. \cite[page 9]{Z}), we recover the $\ell$-regular partition asymptotic. This agrees with the asymptotic arising from the $\ell$-regular partition generating function
		\begin{align*}
		\prod_{n\geq 1} \Phi_\ell(q^n) = \prod_{n\geq 1} \frac{1-q^{\ell n}}{1-q^n}
		\end{align*}
		where the asymptotic for the coefficients of the right-hand side can be evaluated using standard techniques - see e.g. \cite{chern}. We discuss the possibility of finding simpler expressions for $\mathcal C_P$ in more generality in Section \ref{Final Remarks}.
	\end{enumerate}
\end{remarks*}

%\begin{proof}[Proof of Theorem \ref{Thm: general polynomials}]
%	The proof proceeds along the same general lines as that of Theorem \ref{Thm: s_d}. Since we have assumed that $c(n)$ is increasing and we can therefore apply Proposition \ref{P:CorToIngham} once proper asymptotics are derived. These asymptotics mirror closely Lemma \ref{L:BigF}; we first let
%	\begin{align*}
%		\mathcal F_P\lp q \rp := \Log\lp H(q) \rp = \sum_{n \geq 1} f_P(nw),
%	\end{align*}
%	where $q = e^{-w}$ and
%	\begin{align*}
%		f_P(w) := \Log\lp P\lp e^{-w} \rp \rp.
%	\end{align*}
%	Using 
%	\begin{equation}\label{eqn: CF asymp}
%		\mathcal F_P\left(e^{-w}\right) \sim \frac{I_{f_P}}w - \sum_{n\ge0} \frac{B_{n+1}(1)}{(n+1)!} f_P^{(n)}(0) w^n = \dfrac{\mathcal C_P}{w} - \dfrac{f_P(0)}{2} + O\lp w \rp.
%	\end{equation}
%	Observe that $f_P(0) = \Log\lp P(1) \rp$, thus $H\lp e^{-w} \rp \sim \frac{1}{\sqrt{P(1)}} e^{\frac{\mathcal C_P}{w}}$. Applying Proposition \ref{Theorem:EulerMaclaurin1DShifted} quickly gives the result.
%\end{proof}

\section{Proof of Theorem \ref{Thm: r_d}} \label{Proof of Theorem 2}

In this section we prove Theorem \ref{Thm: r_d}. Recall the generating function for $r_d(n)$ given in \eqref{eqn: gen function r_d}. To ease notation, we let
\begin{equation*}
	G_{d}(q) \coloneqq \prod_{n\ge0}F_{d}\left(q^{(d+1)n+1},q\right),
\end{equation*}
where $F_{d}(x,y)$ is defined in Subsection 2.3.

We again being with a preparatory lemma on the asymptotic of $G_d(q)$.
\begin{lemma}\label{Lem: G_d}
	As $w \to 0$ in $D_{\theta}$, we have that
	\begin{align*}
		G_d\left( e^{-w}\right) \sim \frac{e^{	\frac{C_d}{(d+1)w} + \left(\frac{1}{2} - \frac{1}{d+1}\right) d!}}{\left(d!\right)^{\frac{d}{2(d+1)}}} .
	\end{align*}
\end{lemma}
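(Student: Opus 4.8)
The plan is to mimic the proof of Lemma \ref{L:BigF}, but now with the two-variable deformation $F_d(x,y)$ in place of $A_d(x)$ and with the extra subtlety that the first argument $q^{(d+1)n+1}$ and the second argument $q$ both tend to $1$ at correlated rates. Write $\CG_d(q) := \Log(G_d(q)) = \sum_{n\ge0} \Log F_d(q^{(d+1)n+1},q)$, so that with $q = e^{-w}$ we get $\CG_d(e^{-w}) = \sum_{n\ge0} g_d((n+a)w)$ after a suitable rescaling, where I should set $a = \frac{1}{d+1}$ so that $(d+1)n+1 = (d+1)(n+a)$, and
\begin{equation*}
	g_d(z) := \Log F_d\!\left(e^{-(d+1)z}, e^{-z}\right).
\end{equation*}
Note $g_d$ is holomorphic near $0$: by Lemma \ref{L:F_d zeros} there is a neighborhood $\mathcal N_d$ of $y=1$ on which $F_d(x,y)\ne0$ for $x\in[0,1]$, and as $w\to0$ in $D_\theta$ both $e^{-(d+1)nw}\in[0,1]$ and $e^{-w}$ lies in $\mathcal N_d$ eventually, so the principal branch of the logarithm is safely defined and the needed decay holds since $F_d(0,1)=A_d(0)=1$. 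Then I would apply Proposition \ref{Theorem:EulerMaclaurin1DShifted} with shift $a=\frac1{d+1}$ to obtain
\begin{equation*}
	\CG_d(e^{-w}) \sim \frac{I_{g_d}}{w} - \sum_{n\ge0} \frac{B_{n+1}\!\left(\frac1{d+1}\right)}{(n+1)!} g_d^{(n)}(0)\, w^n.
\end{equation*}

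The main-term coefficient is $I_{g_d} = \int_0^\infty \Log F_d(e^{-(d+1)x}, e^{-x})\,dx$; substituting $x \mapsto x/(d+1)$ and using $F_d(u,1) = A_d(u)$ (Lemma \ref{F_d(x,1) = A_d(x)}) — more precisely, using that $F_d(e^{-(d+1)x},e^{-x/(d+1)\cdot(d+1)/(d+1)}) \to A_d(e^{-x})$ as the second variable tends to $1$ uniformly on the relevant range — I expect $I_{g_d} = \frac{1}{d+1}\int_0^\infty \Log A_d(e^{-x})\,dx = \frac{C_d}{d+1}$; a little care is needed because the second argument is $e^{-x/(d+1)}$, not $1$, so strictly one should show the integral of the difference vanishes as $w\to0$, which follows from continuity (Lemma \ref{L:F_d zeros}) and dominated convergence. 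For the $n=0$ term I need $g_d(0) = \Log F_d(1,1) = \log A_d(1) = \log(d!)$ by \eqref{eqn: A_d(1)}, contributing $-B_1(\frac1{d+1})\log(d!) = -(\frac1{d+1}-\frac12)\log(d!) = (\frac12-\frac1{d+1})\log(d!)$, which exponentiates to the stated prefactor $e^{(\frac12-\frac1{d+1})d!}$... wait, the claimed exponent is $(\frac12-\frac1{d+1})d!$ not $(\frac12-\frac1{d+1})\log(d!)$, so I must double-check: $e^{(\frac12-\frac1{d+1})\log(d!)} = (d!)^{\frac12-\frac1{d+1}}$, and combined with the normalization $(d!)^{-d/(2(d+1))}$ in the denominator one checks $\frac12 - \frac1{d+1} = \frac{d-1}{2(d+1)}$, consistent with the exponent $\frac{d-1}{2(d+1)}d!$ appearing in Theorem \ref{Thm: r_d} only if that $d!$ is really $\log(d!)$ — so I would reconcile the normalization carefully, but the structure is: the $n=0$ Euler--Maclaurin term produces exactly the $(d!)$-power prefactor.

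The remaining task is to show all terms $n\ge1$ in the Euler--Maclaurin expansion are negligible, i.e. the asymptotic is $\frac{C_d}{(d+1)w} + (\frac12-\frac1{d+1})\log(d!) + o(1)$ with no surviving positive powers of $w$. This is where Lemma \ref{F_d Differential Equation} enters: it controls $g_d'(0)$. Differentiating $g_d(z) = \Log F_d(e^{-(d+1)z},e^{-z})$ by the chain rule gives
\begin{equation*}
	g_d'(0) = \frac{-(d+1)F_d^{(1,0)}(1,1) - F_d^{(0,1)}(1,1)}{F_d(1,1)},
\end{equation*}
and Lemma \ref{F_d Differential Equation} at $x=1$ says $F_d^{(0,1)}(1,1) = \frac{d}{2}F_d^{(1,0)}(1,1)$, so the numerator is $-(d+1+\frac d2)F_d^{(1,0)}(1,1)$; I would then check this is precisely what makes the $n=1$ term vanish — or, more likely, the cleaner route is to establish an evenness/symmetry statement for an auxiliary shifted function $g_d^\ast(z) := g_d(z) + cz$ for the right constant $c$, exactly as in Lemma \ref{L:BigF} where $f_d^\ast(z) = f_d(z)+\frac{d-1}{2}z$ was shown even using the Eulerian symmetry \eqref{Eulerian Polynomial Symmetry}. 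The analogous symmetry for $F_d$ would come from an identity of the shape $F_d(x,y) = (\text{monomial})\cdot F_d(\,\cdot\,,y^{-1})$ or similar; if such a two-variable symmetry holds it forces all but finitely many Euler--Maclaurin terms to vanish, and the surviving $n=1$ contribution is killed by Lemma \ref{F_d Differential Equation}. The hard part will be pinning down exactly this symmetry/evenness argument for the deformed polynomials and justifying the interchange of limit and integral in the $I_{g_d}$ computation; the rest is bookkeeping parallel to Lemma \ref{L:BigF}.
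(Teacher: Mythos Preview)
Your setup has a structural error that prevents the argument from going through. You define $g_d(z) := \Log F_d(e^{-(d+1)z}, e^{-z})$ and want $\CG_d(e^{-w}) = \sum_{n\ge0} g_d\bigl((n+\tfrac1{d+1})w\bigr)$, but substituting gives
\[
g_d\!\left(\left(n+\tfrac1{d+1}\right)w\right) = \Log F_d\!\left(e^{-((d+1)n+1)w},\, e^{-(n+\frac1{d+1})w}\right),
\]
whereas the $n$-th term of $\CG_d(e^{-w})$ has second argument $e^{-w}$, \emph{independent of $n$}. No single function of one variable captures this sum, because the two slots of $F_d$ scale differently with $n$. Your $I_{g_d}$ is then a fixed number containing no $w$, so the ``dominated convergence as $w\to0$'' patch cannot even be formulated; the integral you wrote is genuinely $\tfrac{1}{d+1}\int_0^\infty \Log F_d(e^{-u},e^{-u/(d+1)})\,du$, not $C_d/(d+1)$.

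The paper's fix is to treat $w$ as a parameter: set $g_{d,w}(z) := \Log F_d(e^{-z}, e^{-w})$, so that $\CG_d(e^{-w}) = \sum_{n\ge0} g_{d,w}\bigl((n+\tfrac1{d+1})(d+1)w\bigr)$ holds exactly. Because $g_{d,w}$ now depends on $w$, Proposition~\ref{Theorem:EulerMaclaurin1DShifted} is not enough (its implied constant is not uniform in the function), and the paper applies the exact identity of Proposition~\ref{Thm: Euler-Mac full} with $N=2$. The leading integral $I_{g_{d,w}}/((d+1)w)$ then itself varies with $w$: its value at $w=0$ gives $C_d/((d+1)w)$, and its $w$-derivative contributes an extra constant term $\tfrac{1}{d+1} I_{[\partial_w g_{d,w}]_{w=0}}$. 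Lemma~\ref{F_d Differential Equation} enters precisely here, converting the integrand $-F_d^{(0,1)}(e^{-x},1)/F_d(e^{-x},1)$ into the total derivative $\tfrac{d}{2}\,\partial_x \log F_d(e^{-x},1)$ and yielding $-\tfrac{d}{2(d+1)}\log(d!)$; it is not used to kill an $n=1$ term. Finally, since the lemma is only a $\sim$ statement, all $O(w)$ contributions go straight into the error and no evenness/symmetry argument is needed. (Your suspicion about $d!$ versus $\log(d!)$ is well founded: the paper's computation gives $g_{d,0}(0)=\log(d!)$, and the stated exponents should carry $\log(d!)$.)
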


\begin{proof}
	We have
	\begin{equation*}
		\CG_{d}(q):=\Log(G_{d}(q)) = \sum_{n\ge0}\Log\left(F_{d}\left(q^{(d+1)n+1},q \right)\right).
	\end{equation*}
	Write
	\begin{equation*}
		\CG_{d}\left(e^{-w}\right) = \sum_{n\ge0}g_{d,w}\left(\left(n + \frac{1}{d+1}\right)(d+1)w\right),
	\end{equation*}
	where 
	\begin{equation*}
		g_{d,w}(x):=\Log\left(F_{d}\left(e^{-z},e^{-w}\right)\right).
	\end{equation*}
		
	Observe that $F_d(0,0) = 1$ and that by Lemma \ref{L:F_d zeros}, $F_d\lp e^{-tw}, e^{-w} \rp$ does not vanish for $|w|$ small and $t \in \R_0^+$. Note that by Lemma \ref{F_d(x,1) = A_d(x)} and \eqref{eqn: A_d(1)} we have that $F_d(1,1) = A_d(1) = d! >0 $ and that $F_d(e^{-tw},e^{-w})$ is holomorphic in $w$ for $t\in\R_0^+$. Therefore, in the limit $w \to 0$ (i.e., for $|w|$ suitably small) we again avoid the branch of the complex logarithm on the cut $(-\infty,0]$.
	
	Then applying Proposition \ref{Thm: Euler-Mac full} with $N=2$ gives that
	\begin{align}\label{eqn: asymp of CG}
		\mathcal G_d\left(e^{-w}\right) &= \dfrac{I_{g_{d,w}}}{(d+1)w} - \sum_{n=0}^1 \dfrac{B_{n+1}\lp \frac{1}{d+1} \rp g_{d,w}^{(n)}\lp 0 \rp}{(n+1)!} (d+1)^n w^n \nonumber\\
		&\hspace{0.2cm}- \frac1{d+1} \sum_{k \geq 2} \dfrac{g_{d,w}^{(k)}(0)}{(k+1)!} w^k - \dfrac{w^2}{2\pi i} \sum_{n=0}^1 \dfrac{(d+1)^n B_{n+1}(0)}{(n+1)!} \int_{C_R(0)} \dfrac{g_{d,w}^{(n)}(z)}{z^{2-n} (z-w)} dz \nonumber\\
		&\hspace{0.2cm}- \frac{(d+1)w}{2} \int_w^{w\infty} g_{d,w}^{\prime\prime}(z) \widetilde B_2\lp \frac{z}{(d+1)w} - \frac{1}{d+1} \rp dz.
	\end{align}
	The main asymptotic contribution comes from the term
	\begin{equation*}
		\frac{I_{g_{d,0}}}{(d+1)w} = \frac{C_d}{(d+1)w}
	\end{equation*}
	by Lemma \ref{F_d(x,1) = A_d(x)}.
	
	The constant term in \eqref{eqn: asymp of CG} is 
	\begin{equation*}
		\frac{I_{\left[\frac\partial{\partial w}g_{d,w}\right]_{w=0}}}{d+1} - B_{1}\left(\frac{1}{d+1}\right)g_{d,0}(0).
	\end{equation*}
	We have by Lemma \ref{F_d(x,1) = A_d(x)} and \eqref{eqn: A_d(1)} that
	\begin{equation*}
		g_{d,0}(0) = A_{d}(1) = d!.
	\end{equation*}
	We then compute
	\begin{equation*}
		\left[\frac{\partial}{\partial w} g_{d,w}(x)\right]_{w=0} = \left[\frac{\partial}{\partial w} \log\left(F_{d}\left(e^{-x},e^{-w}\right)\right)\right]_{w=0} = - \frac{F_{d}^{(0,1)}\left(e^{-x},1\right)}{F_{d}\left(e^{-x},1\right)}.
	\end{equation*}
	Using Lemma \ref{F_d Differential Equation} we therefore obtain that
	\begin{multline*}
		I_{\left[\frac\partial{\partial w}g_{d,w}\right]_{w=0}} dx = -\int_0^\infty \frac{F_d^{(0,1)}\left(e^{-x},1\right)}{F_d\left(e^{-x},1\right)} dx = \frac d2\int_0^\infty \frac{\frac\del{\del x}F_d(e^{-x},1)}{F_d\left(e^{-x},1\right)} dx\\
		= \frac d2\int_0^\infty \frac\del{\del x} \log\left(F_d\left(e^{-x},1\right)\right) dx = \frac d2(\log(F_d(0,1))-\log(F_d(1,1))).
	\end{multline*}
	We now claim that $F_d(0,1)=1$. By Lemma \ref{F_d(x,1) = A_d(x)}, we have $F_d(0,1)=A_d(0)$. We plug into \eqref{Eulerian Polynomial Recursion} and obtain
	\begin{equation*}
		A_d(0) = A_{d-1}(0) = 1,
	\end{equation*}
	as $A_1(0)=1$. Moreover, again by Lemma \ref{F_d(x,1) = A_d(x)} and \eqref{eqn: A_d(1)}, we have
	\begin{equation*}
		F_d(1,1) = A_d(1) = d!.
	\end{equation*}
	Thus,
	\begin{equation*}
		I_{\left[\frac\partial{\partial w}g_{d,w}\right]_{w=0}} = -\frac d2\log(d!).
	\end{equation*}
	So the constant term in \eqref{eqn: asymp of CG} is equal to
	\begin{align*}
		-\frac{d\log(d!)}{2(d+1)} + \left(\frac{1}{2} -\frac{1}{d+1}\right) d!.
	\end{align*}
	Exponentiating gives the claim, noting that the remaining terms go into the error.
\end{proof}

We are now in a position to prove Theorem \ref{Thm: r_d}.

\begin{proof}[Proof of Theorem \ref{Thm: r_d}]
	Recall the generating function for $r_{d}(n)$ in \eqref{eqn: gen function r_d}.
	We then use Lemma \ref{Lem: G_d} and \eqref{E:partas} to obtain that
	\begin{align*}
		\sum_{n\geq0} r_d(n)e^{-nw} \sim \sqrt{\frac w{2\pi}} e^{\frac{\pi^2}{6w}} \frac{e^{	\frac{C_d}{(d+1)w} + \left(\frac{1}{2} - \frac{1}{d+1}\right) d!}}{\left(d!\right)^{\frac{d}{2(d+1)}}} 
		=\frac{e^{\frac{d-1}{2(d-1)}d!}}{\sqrt{2\pi}(d!)^{\frac{d}{2(d+1)}}}\sqrt{w}e^{\left(\frac{C_{d}}{d+1}+\frac{\pi^{2}}{6}\right)\frac{1}{w}}
	\end{align*}
as $w \to 0$ in $D_\theta$. Applying Proposition \ref{P:CorToIngham} with $\lambda = (2\pi)^{-\frac{1}{2}} e^{\frac{d-1}{2(d+1)}d!} d!^{-\frac{d}{2(d+1)}}$, $\beta = \frac{1}{2}$, and $\gamma = \frac{C_d}{d+1} + \frac{\pi^2}{6}$ gives the claim.
\end{proof}

We note here that asymptotic ``tricks'' used to prove Theorem \ref{Thm: r_d} can be generalized quite broadly. The main point is that Lemma \ref{Lem: G_d} can be greatly generalized to many products of the form $\prod_{n \geq 0} P\lp q^n, q \rp$ where $P(x,y) \in \R[x,y]$. The basic idea is to take a logarithm in order to reduce the question to asymptotics for $\Log\lp P\lp e^{-nz}, e^{-z} \rp \rp$. The main idea of our method is to use the Euler--Maclaurin formula as stated in Proposition \ref{Thm: Euler-Mac full} to give an exact formula for $\Log\lp P\lp e^{-z}, e^{-w} \rp \rp$ for $w$ fixed. Then, suitable holomorphic properties of this expression permit the substitution $w = z$ as $z \to 0$, and we can then compute suitable asymptotics as $z \to 0$. We execute these objectives in the following two results.

\begin{lemma} \label{L: Asymp}
	Let $P \in \R[x,y]$ be a polynomial such that $P(1,1) > 0$, $P(0,1) = 1$, and such that $P(x,1)$ has no zeros for $0 \leq x \leq 1$. Let $a$, $b\in\N$ with $0\leq a<b$ and
	\begin{align*}
		G_P(q) := \prod_{n \geq 0} P\lp q^{bn+a}, q \rp.
	\end{align*}
	Then in each region $D_\theta$ with $0 < \theta < \frac{\pi}{2}$, we have as $w \to 0$ in $D_\theta$ that
	\begin{align*}
		G_P\lp e^{-w} \rp \sim P(1,1)^{\frac 12 - \frac ab} e^{\mathcal D_P} \cdot e^{\frac{\mathcal C_{\mathcal P}}{bw}}
	\end{align*}
	where we define $\mathcal{P}(x) := P(x,1)$, $\mathcal C_P$ as in Theorem \ref{Thm: general polynomials} and
	\begin{align*}
		\mathcal D_P:= I_{\left[\frac\partial{\partial w}g_{P,w}\right]_{w=0}} = - \int_0^\infty \dfrac{P^{(0,1)}\lp e^{-x}, 1 \rp}{P\lp e^{-x}, 1 \rp} dx.
	\end{align*}
\end{lemma}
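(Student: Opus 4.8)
\textbf{Proof plan for Lemma \ref{L: Asymp}.} The plan is to mimic the proof of Lemma \ref{Lem: G_d}, replacing the specific deformed Eulerian polynomial $F_d$ by the general $P$. First I would set $\mathcal{G}_P(q) := \Log(G_P(q)) = \sum_{n\ge0} \Log(P(q^{bn+a},q))$ and write, for $q = e^{-w}$,
\begin{align*}
	\mathcal{G}_P\left(e^{-w}\right) = \sum_{n\ge0} g_{P,w}\left(\left(n+\tfrac ab\right) bw\right), \qquad g_{P,w}(z) := \Log\left(P\left(e^{-z},e^{-w}\right)\right).
\end{align*}
The hypotheses $P(1,1)>0$, $P(0,1)=1$, and $P(x,1) \ne 0$ on $[0,1]$ guarantee, via continuity in $w$ and holomorphy of $P(e^{-z},e^{-w})$, that for $|w|$ sufficiently small the argument of the logarithm stays away from the cut $(-\infty,0]$, so $g_{P,w}$ is holomorphic in a neighbourhood of $D_\theta$ and, together with all its derivatives, of sufficient decay (since $P(0,1)=1$ forces $\Log(P(e^{-z},1)) \to 0$ exponentially). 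This justifies applying the Euler--Maclaurin expansion of Proposition \ref{Theorem:EulerMaclaurin1DShifted} (or Proposition \ref{Thm: Euler-Mac full} with $N=2$, as in Lemma \ref{Lem: G_d}) with shift parameter $\frac ab$ and step size $bw$.

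Next I would extract the two relevant terms. The leading term is $\frac{I_{g_{P,0}}}{bw}$, and since $g_{P,0}(z) = \Log(P(e^{-z},1)) = \Log(\mathcal P(e^{-z}))$ this equals $\frac{\mathcal C_{\mathcal P}}{bw}$ by the definition of $\mathcal C_P$ in Theorem \ref{Thm: general polynomials}. For the constant term, the same bookkeeping as in Lemma \ref{Lem: G_d} gives
\begin{align*}
	\frac{I_{\left[\frac\partial{\partial w} g_{P,w}\right]_{w=0}}}{b} - B_1\left(\tfrac ab\right) g_{P,0}(0),
\end{align*}
where $g_{P,0}(0) = \Log(P(1,1)) = \log(P(1,1))$ and $B_1\left(\tfrac ab\right) = \tfrac ab - \tfrac12$, so the second summand contributes $\left(\tfrac12 - \tfrac ab\right)\log(P(1,1))$, i.e. the factor $P(1,1)^{\frac12-\frac ab}$ after exponentiating. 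The first summand is exactly $\mathcal D_P/b$; one computes $\left[\frac\partial{\partial w} g_{P,w}(z)\right]_{w=0} = -\frac{P^{(0,1)}(e^{-z},1)}{P(e^{-z},1)}$ by the chain rule, giving the stated integral formula for $\mathcal D_P$. (Here, unlike in Lemma \ref{Lem: G_d}, we do not simplify $\mathcal D_P$ further since there is no analogue of the differential equation in Lemma \ref{F_d Differential Equation}; it is simply named.) One must also note that the constant term picks up exactly $\mathcal D_P/b$ and not $\mathcal D_P$ — wait, I should double-check the normalization against Lemma \ref{Lem: G_d}: there the constant term was $\frac{I_{[\cdots]}}{d+1} - B_1\!\left(\frac1{d+1}\right)g_{d,0}(0)$, and the final answer contained $\left(\frac12-\frac1{d+1}\right)d! = \frac1b I_{[\cdots]}$-free part; indeed $\mathcal D_P$ as defined equals $I_{[\cdots]}$ itself, and the lemma statement writes $e^{\mathcal D_P}$, so the constant term must actually be $\mathcal D_P - B_1(\frac ab)\log(P(1,1))$, meaning the Euler--Maclaurin step-size normalization already absorbs the $\frac1b$; I would track this carefully by writing $\sum_{n\ge0} h((n+\frac ab)(bw))$ and noting $I_h = \int_0^\infty h(x)dx = \frac1b\int_0^\infty h(bu)\,b\,du$-type rescaling so that the $\frac1{bw}$ and the constant term land as stated.

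Finally I would invoke the same parity/Bernoulli vanishing argument only insofar as needed: here we merely claim an asymptotic equivalence $\sim$ (not a full expansion with controlled error as in Lemma \ref{L:BigF}), so it suffices that all higher terms in the Euler--Maclaurin expansion, and the explicit integral remainders in Proposition \ref{Thm: Euler-Mac full}, are $o(1)$ as $w\to0$ in $D_\theta$ — which follows from the sufficient decay of $g_{P,w}$ and its derivatives, exactly as the ``remaining terms go into the error'' sentence at the end of the proof of Lemma \ref{Lem: G_d}. Exponentiating the expansion $\mathcal G_P(e^{-w}) = \frac{\mathcal C_{\mathcal P}}{bw} + \mathcal D_P + \left(\frac12-\frac ab\right)\log(P(1,1)) + o(1)$ then yields the claim. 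I expect the main obstacle to be purely bookkeeping: getting the step-size rescaling factors of $b$ consistent between the leading term, the constant term, and the definition of $\mathcal D_P$, and confirming that $\Log$ (principal branch) is continuous along the whole ray of integration for small $|w|$ uniformly in the relevant region.
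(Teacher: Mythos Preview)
Your plan is essentially the paper's proof: take logarithms, write $\mathcal G_P(e^{-w})=\sum_{n\ge0} g_{P,w}\bigl((n+\tfrac ab)bw\bigr)$ with $g_{P,w}(z)=\Log\!\bigl(P(e^{-z},e^{-w})\bigr)$, apply Proposition~\ref{Thm: Euler-Mac full} with $N=2$, and read off the $\tfrac1w$-term and the constant term before exponentiating. On the normalization you flag, your first computation is the careful one: Taylor expanding $I_{g_{P,w}}=\mathcal C_{\mathcal P}+w\,\mathcal D_P+O(w^2)$ and dividing by $bw$ gives constant contribution $\mathcal D_P/b$, exactly parallel to the proof of Lemma~\ref{Lem: G_d} where this term carries an explicit factor $\tfrac1{d+1}$; the paper's proof of the present lemma records it simply as $\mathcal D_P$ (to match the stated $e^{\mathcal D_P}$), so the discrepancy you noticed lies in the statement rather than in your bookkeeping, and you should trust your initial $\mathcal D_P/b$.
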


\begin{proof}
	We consider $\mathcal G_P(q) := \Log\lp G_P(q) \rp$. Then we have
	\begin{align*}
		\mathcal G_P\lp e^{-w} \rp = \sum_{n \geq 0} g_{P,w}\lp \lp n + \frac{a}{b} \rp bw \rp,
	\end{align*}
	where for any fixed $w$ we define
	\begin{align*}
		g_{P,w}\lp z \rp := \Log\lp P\lp e^{-z}, e^{-w} \rp \rp.
	\end{align*}
	As in previous results, the conditions we assume for $P(x,y)$ ensure that $g_{P,w}(z)$ satisfies the analytic conditions necessary for convergence of $\mathcal G_P\lp e^{-w} \rp$ and the application of Proposition \ref{Thm: Euler-Mac full}. By applying Proposition \ref{Thm: Euler-Mac full} in this setting, we see that
	\begin{align*}\label{eqn: asymp G_P}
		\mathcal G_P\left(e^{-w}\right) &= \dfrac{I_{g_{P,w}}}{bw} - \sum_{n=0}^1 \dfrac{B_{n+1}\lp \frac{a}{b} \rp g_{P,w}^{(n)}\lp 0 \rp}{(n+1)!} (bw)^n \nonumber\\
		&\hspace{0.2cm}- \frac{1}{b} \sum_{k \geq 2} \dfrac{g_{P,w}^{(k)}(0) a^{k+1}}{(k+1)!} w^k - \dfrac{(bw)^2}{2\pi i} \sum_{n=0}^1 \dfrac{\lp \frac ab \rp^{2-n} B_{n+1}(0)}{(n+1)!} \int_{C_R(0)} \dfrac{g_{P,w}^{(n)}(z)}{z^{2-n} (z-aw)} dz \nonumber\\
		&\hspace{0.2cm}+ \frac{bw}{2} \int_{aw}^{w\infty} g_{P,w}^{\prime\prime}(z) \widetilde B_2\lp \frac{z}{bw} - \dfrac{a}{b} \rp dz.
	\end{align*}
	In order to apply Proposition \ref{P:CorToIngham}, we need the terms up through the constant term in $w$ of this expansion; we see that
	\begin{align*}
		\mathcal G_P\lp e^{-w} \rp &= \dfrac{I_{g_{P,w}}}{bw} - B_1\lp \frac ab \rp g_{P,w}(0) + O\lp w \rp \\ &= \dfrac{\mathcal C_{\mathcal P}}{bw} + \mathcal D_P + \lp \dfrac{1}{2} - \dfrac{a}{b} \rp \Log\lp P(1,1) \rp + O(w).
	\end{align*}
	This completes the proof.
\end{proof}

On the basis of this lemma, we can prove asymptotic formulas for the coefficients of these very general rational products.

\begin{theorem} \label{Thm: General Theorem}
	Let $P,Q \in \R[x,y]$ be polynomials such that $P(1,1), Q(1,1) > 0$, $P(0,1) = Q(0,1) = 1$, and such that $P(x,1)$ and $Q(x,1)$ have no zeros for $0 \leq x \leq 1$. Let
	\begin{align*}
	H(q) := \sum_{n \geq 0} c(n) q^n := \prod_{n \geq 0} \dfrac{P\lp q^{A n + a}, q \rp}{Q\lp q^{B n + b}, q \rp},
	\end{align*}
	and suppose that for $n \gg 0$ the $c(n)$ are increasing functions. Then as long as $\frac{\mathcal C_{\mathcal{P}}}{A} > \frac{\mathcal C_{\mathcal{Q}}}{B}$, we have, as $n \to \infty$,
	\begin{align*}
		c(n) \sim \frac{\l_{P,Q,a,A,b,B} \cdot \mathcal C_{P,Q,A,B}^{\frac{1}{4}}}{2\sqrt\pi n^{\frac{3}{4}}}e^{2\sqrt{\mathcal C_{P,Q,A,B} n}},
	\end{align*}
	where we define
	\begin{align*}
		\mathcal C_{\mathcal{P},\mathcal{Q},A,B} := \dfrac{\mathcal C_{\mathcal{P}}}{A} - \dfrac{\mathcal C_{\mathcal{Q}}}{B} >0, \ \ \ \mathcal D_{P,Q} := \mathcal D_P - \mathcal D_Q, \ \\
		\lambda_{P,Q,a,A,b,B} := P(1,1)^{\frac 12 - \frac aA} Q(1,1)^{\frac bB - \frac 12} e^{\mathcal D_{P,Q}}.
	\end{align*}
\end{theorem}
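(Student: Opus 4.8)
The plan is to deduce the theorem formally from Lemma~\ref{L: Asymp} together with the Tauberian Proposition~\ref{P:CorToIngham}. Set
\begin{align*}
	G_P(q) := \prod_{n\ge0} P\lp q^{An+a},q \rp, \qquad G_Q(q) := \prod_{n\ge0} Q\lp q^{Bn+b},q \rp,
\end{align*}
so that $H(q) = G_P(q)/G_Q(q)$. Applying Lemma~\ref{L: Asymp} to $G_P$ (taking the pair $(b,a)$ there to be $(A,a)$) and to $G_Q$ (taking $(b,a)=(B,b)$) gives, as $w\to0$ in any $D_\theta$ with $0<\theta<\frac\pi2$,
\begin{align*}
	G_P\lp e^{-w} \rp \sim P(1,1)^{\frac12-\frac aA} e^{\mathcal D_P}\, e^{\frac{\mathcal C_{\mathcal P}}{Aw}}, \qquad G_Q\lp e^{-w} \rp \sim Q(1,1)^{\frac12-\frac bB} e^{\mathcal D_Q}\, e^{\frac{\mathcal C_{\mathcal Q}}{Bw}}.
\end{align*}
Dividing and unravelling the definitions of $\mathcal D_{P,Q}$, $\mathcal C_{\mathcal P,\mathcal Q,A,B}$, and $\lambda_{P,Q,a,A,b,B}$ yields
\begin{align*}
	H\lp e^{-w} \rp \sim \lambda_{P,Q,a,A,b,B}\, e^{\frac{\mathcal C_{\mathcal P,\mathcal Q,A,B}}{w}} \qquad \text{as } w\to0 \text{ in } D_\theta .
\end{align*}
In particular the first assertion of \eqref{E:as1} holds with $\lambda=\lambda_{P,Q,a,A,b,B}$, $\beta=0$, and $\gamma=\mathcal C_{\mathcal P,\mathcal Q,A,B}$. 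Moreover every region of the form $\{x+iy : |y|\le\Delta x\}$ is contained in $D_\theta$ for $\theta=\arctan\Delta<\frac\pi2$, so the estimate above, being valid uniformly throughout $D_\theta$, also supplies the majorant bound of \eqref{E:as1} (here one uses that $\gamma>0$ and that $\mathrm{Re}(1/w)$ is comparable to $1/|w|$ on the sector). The hypothesis $\frac{\mathcal C_{\mathcal P}}{A}>\frac{\mathcal C_{\mathcal Q}}{B}$ is precisely the positivity $\gamma>0$ needed in Proposition~\ref{P:CorToIngham}, and substituting $\beta=0$ into its conclusion gives exactly the claimed asymptotic for $c(n)$.

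It remains to check the standing hypotheses of Proposition~\ref{P:CorToIngham}. The product defining $H$ converges for $|q|<1$, so $H$ has radius of convergence at least $1$. Monotonicity of $(c(n))_n$ is assumed for $n\gg0$; since $\lambda_{P,Q,a,A,b,B}>0$ (because $P(1,1),Q(1,1)>0$ and all the exponents occurring are real), the displayed asymptotic forces $H(e^{-t})\to\infty$ as $t\to0^+$, so the $c(n)$ cannot be eventually non-positive, and hence eventual monotonicity in fact yields $c(n)\ge0$ for $n\gg0$. Redefining the finitely many remaining coefficients to equal the first value for which $c(n)\ge0$ changes neither the radius of convergence of $H$, nor its behaviour as $q\to1$, nor the tail of $(c(n))_n$, so after this harmless modification the sequence is genuinely non-negative and weakly increasing and Proposition~\ref{P:CorToIngham} applies verbatim.

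The substantive input, and the place where difficulty would arise if one had to argue from scratch, is the uniformity contained in Lemma~\ref{L: Asymp}: one needs the Euler--Maclaurin expansion of $\mathcal G_P(e^{-w})=\Log(G_P(e^{-w}))$ provided by Proposition~\ref{Thm: Euler-Mac full} to hold uniformly as $w\to0$ throughout a full sector $D_\theta$, not merely along the positive real axis, since it is exactly this uniformity that feeds the side condition in \eqref{E:as1}. As in the proof of Lemma~\ref{Lem: G_d}, this rests on $g_{P,w}(z)=\Log(P(e^{-z},e^{-w}))$ being holomorphic and of sufficient decay on $D_\theta$, which holds because the assumptions that $P(x,1)$ has no zeros on $[0,1]$ and that $P(1,1)>0$ keep $P(e^{-z},e^{-w})$ away from the branch cut $(-\infty,0]$ for $|w|$ small, so that the principal logarithm is well defined and the integral remainders of Proposition~\ref{Thm: Euler-Mac full} remain controlled. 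Once Lemma~\ref{L: Asymp} is granted in this uniform form, the rest is routine bookkeeping with the constants $\mathcal C$, $\mathcal D$, and $\lambda$.
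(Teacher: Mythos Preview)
Your proof is correct and follows essentially the same approach as the paper: apply Lemma~\ref{L: Asymp} to the numerator and denominator products, divide, and feed the resulting sector asymptotic into Proposition~\ref{P:CorToIngham} with $\beta=0$. You are in fact more careful than the paper in bridging the gap between the hypothesis ``$c(n)$ increasing for $n\gg0$'' and the standing assumption ``$c(n)$ non-negative and weakly increasing'' required by Proposition~\ref{P:CorToIngham}; the paper's proof glosses over this, whereas your argument that $H(e^{-t})\to\infty$ precludes eventual non-positivity and that a finite modification is harmless is a legitimate way to close it.
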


\begin{proof}
	Because we assume that $c(n)$ is increasing for $n \gg 0$, we need only calculate suitable asymptotics as $z \to 0$ in regions $D_\theta$ for certain $0 < \theta < \frac{\pi}{2}$. From Lemma \ref{L: Asymp} we obtain
	\begin{align*}
		\prod_{n \geq 0} \dfrac{P\lp q^{A n + a}, q \rp}{Q\lp q^{B n + b}, q \rp} \sim  P(1,1)^{\frac 12 - \frac aA} Q(1,1)^{\frac bB - \frac 12} \cdot e^{\lp \frac{\mathcal C_P}{A} - \frac{\mathcal C_Q}{B} \rp \frac{1}{w} + \mathcal D_{P,Q}}.
	\end{align*}
	This completes the proof by Proposition \ref{P:CorToIngham} with $\lambda = \lambda_{P,Q,a,A,b,B}$, $\beta = 0$, and $\gamma = \mathcal C_{P,Q,A,B} >0$.
\end{proof}

\section{Final Remarks} \label{Final Remarks}

\subsection{Applications in mathematical physics}
There are potential applications of our method to the computation of asymptotic formulas for coefficients of thermal partition functions in super Yang--Mills theory \cite{AMMPR,BCDM,M}. For example the partition functions in \cite[equation (5.10)]{AMMPR}, \cite[equation (2.6)]{BCDM}, and \cite[equation (7.3)]{M}, can all be treated with this approach. These partition functions often take the form of infinite products over rational functions evaluated at $q^n$. Asymptotic formulas for these partition functions can be used to derive information about the entropy of the relevant system. Although the physics literature does contain some elementary methods for computing asymptotics for these coefficients, our method is capable of vast generalization. In particular, since our asymptotic method is based upon the exact formula given in Proposition \ref{Thm: Euler-Mac full}, one could compute asymptotics to much higher degrees of precision, and therefore obtain more accurate entropies.

\subsection{Dilogarithms and Bloch groups}

Let $P$ be a polynomial of degree $d$ with integral coefficients with $P(0)=1$ and no roots in the interval $[0,1]$. If $\alpha_1, \alpha_2, \dots, \alpha_d \in \C\backslash[0,1]$ are the zeros of $P$, then by Proposition \ref{Main Term Integrals} we have
\begin{align*}
	\mathcal C_P = - \sum_{j=1}^d \mathrm{Li}_2\lp \dfrac{1}{\alpha_j} \rp.
\end{align*}
It is natural to ask whether there is a simpler representation for $\mathcal C_P$. Since the values $\frac1{\alpha_j}$ are the zeros of the reciprocal polynomial $z^d P(\frac1z)$, we reframe this question in the following slightly more general way.

\begin{question*}
	Let $P(z) \in \Z[z]$ be a monic polynomial. Then under what circumstances does the value
	\begin{align*}
		\sum_{P(\alpha) = 0} \mathrm{Li}_2\lp \alpha \rp
	\end{align*}
	simplify in some sense?
\end{question*}

It has been pointed out to the authors by Zagier that this somewhat vague question is closely connected to the so-called Bloch group. This is defined in terms of the {\it Bloch--Wigner dilogarithm function} 
$$
	D(z) := \mathrm{Im}\lp \mathrm{Li}_2(z) \rp + \mathrm{Arg}\lp 1-z \rp \log |z|,
$$
which is real-analytic for $z \in \C\backslash \{ 0,1 \}$ and satisfies 
$$
	D(z) = -D(1-z) = -D\lp \dfrac 1z \rp
$$ 
along with a five-term relation \cite[p. 11]{Z}
\begin{align*}
	D\lp x \rp + D\lp y \rp + D\lp \dfrac{1-x}{1-xy} \rp + D\lp 1-xy \rp + D\lp \dfrac{1-y}{1-xy} \rp = 0.
\end{align*}
Motivated by connections to the volumes of hyperbolic 3-manifolds (as explained in \cite{Z}), Bloch defined a group structure based on this functional equation. To be more precise, the {\it Bloch group} of a field $L \subseteq \overline{\Q}$, denoted $\mathcal B_L$, is defined as all formal linear combinations of symbols $\left[ \alpha \right], \alpha \in L^\times \setminus\{1\}$ subject to the relations
\begin{align*}
	\left[ x \right] + \left[ \dfrac{1}{x} \right] = \left[ x \right] + \left[ 1 - x \right] = \left[ x \right] + \left[ y \right] + \left[ \dfrac{1-x}{1-xy} \right] + \left[ 1 - xy \right] + \left[ \dfrac{1-y}{1-xy} \right] = 0.
\end{align*}
Now, $\mathcal B_L$ is certainly abelian and countable, its rank is the number of pairs of complex embeddings of $L$ into $\C$, and nontrivial elements are easy to produce \cite[p. 15--16]{Z}. Torsion elements have been well-studied, and the basic result is that for $\xi \in L$, we have $\left[ \xi \right] \in \mathcal B_L$ is torsion if and only if $D\lp \xi^\sigma \rp = 0$ for all complex embeddings $\sigma \in \mathrm{Gal}\lp L/\mathbb{Q} \rp$ (see Section B on page 36 of \cite{Z}). If we compare this fact with the properties of the so-called Rogers dilogarithm \cite[p. 23]{Z}, then we see that this is equivalent to $\mathrm{Li}_2\lp \xi \rp \in \log (\overline{\mathbb{Q}})^{\otimes 2}$, that is, that $\mathrm{Li}_2\lp \xi \rp$ can be expressed as linear combinations of forms $\log\lp \alpha \rp \log\lp \beta \rp$ for $\alpha, \beta \in \overline{\mathbb{Q}}$. Therefore, the previous question about simplified values of $\mathcal C_P$ motivates the following question:
\begin{question*}
	Let $P \in \Z[x]$ be a monic polynomial with splitting field $L$. Then under what circumstances is
	\begin{align*}
		\sum_{P\lp \alpha \rp = 0} \left[ \alpha \right] \in \mathcal B_L
	\end{align*}
	a torsion element in the Bloch group $\mathcal B_L$, and if it is torsion, what is its order?
\end{question*}
For the Eulerian polynomials $A_d(x)$ in particular, by using \label{Eulerian Polynomial Symmetry} and the dilogarithm identity $\mathrm{Li}_2(x) + \mathrm{Li}_2\lp \frac{1}{x} \rp = -\zeta(2) - \frac{\log(-x)^2}{2}$, it can be shown that $\mathcal C_{A_d}$ is, up to an explicit multiple of $\zeta(2)$, an element of $\log\lp \overline{\Q} \rp^{\otimes 2}$. It would be quite interesting to understand this question more deeply using the tools of Bloch groups and the dilogarithm identities.

\end{document}